\documentclass[11pt,a4paper,reqno]{amsart}
\usepackage[english]{babel}
\usepackage[applemac]{inputenc}
\usepackage{bbm}
\usepackage{cite}
\usepackage{float, verbatim}
\usepackage{amsmath}
\usepackage{amssymb}
\usepackage{amsthm}
\usepackage{amsfonts}
\usepackage{graphicx}
\usepackage{mathtools}
\usepackage{enumitem}
\usepackage[colorlinks = true, citecolor = black]{hyperref}
\pagestyle{headings}
\usepackage{color}
\usepackage{tikz}
\usepackage{xcolor}
\usepackage{pgfplots}
\usepackage{caption}
\usepackage{subcaption}
\usepackage{enumerate}
\usepackage{autobreak}
\usepackage{url}

\newcommand{\ubox}{\overline{\dim_{\mathrm{B}}}}
\newcommand{\lbox}{\underline{\dim_{\mathrm{B}}}}
\newcommand{\nbox}{\dim_{\mathrm{B}}}

\newcommand{\Haus}{\dim_{\mathrm{H}}}

\newtheorem*{thm*}{Theorem}
\newtheorem*{conj*}{Conjecture}

\newtheorem{thm}{Theorem}[section]

\newtheorem{lma}[thm]{Lemma}
\newtheorem{cor}[thm]{Corollary}
\newtheorem{defn}[thm]{Definition}

\newtheorem{conj}[thm]{Conjecture}

\begin{document}
	
	\title{Multi-rotations on the unit circle}
	
	\author{Han Yu}
	\address{Han Yu\\
		School of Mathematics \& Statistics\\University of St Andrews\\ St Andrews\\ KY16 9SS\\ UK \\ }
	\curraddr{}
	\email{hy25@st-andrews.ac.uk}
	\thanks{}
	
	\subjclass[2010]{28A80, 37C45, 11B30}
	
	\keywords{multi-rotation orbits, $\alpha\beta$-sets, recurrent sequences, Diophantine approximation on linear forms}
	
	\date{}
	
	\dedicatory{}
	
	\begin{abstract}
		In this paper, we study multi-rotation orbits on the unit circle. We obtain a natural generalization of a classical result which says that orbits of irrational rotations on the unit circle are dense. It is possible to show that this result holds true if instead of iterating a single irrational rotation, one takes a multi-rotation orbit along a finitely recurrent sequence over finitely many different irrational rotations. We also discuss some connections between the box dimensions of multi-rotation orbits and Diophantine approximations. In particular, we improve a result by Feng and Xiong in the case when the rotation parameters are algebraic numbers.
	\end{abstract}
	
	\maketitle
	\allowdisplaybreaks
	\section{introduction}
	\subsection{Multi-rotation orbits: definitions and known results}
	Let $\alpha$ be an irrational number. We define its \emph{reduced fractional part} $\{\alpha\}\in (-0.5,0.5]$ to be the difference between $\alpha$ and its closest integer. Notice that when $\alpha$ is irrational, its closest integer is well defined. A classical result in number theory says that the sequence of fractional parts $\{n\alpha\}_{n\geq 1}$ is dense in $[-0.5,0.5].$ Naturally, one may consider rotations with more than one angles. More precisely, let $k\geq 1$ be an integer and let $\alpha_1,\dots,\alpha_k$ be $k$ irrational numbers such that $1,\alpha_1,\dots,\alpha_k$ are linearly independent over the field $\mathbb{Q}.$ We call numbers satisfying the above condition to be \emph{rationally independent}. This condition rules out some non-interesting cases for example when $k=2, \alpha_1=\alpha_2.$   Let $\omega\in \Lambda^\mathbb{N}$ be a sequence over digits $\Lambda=\{\alpha_1,\dots,\alpha_k\}.$ For each integer $i\geq 1$, let $\omega(i)$ be the $i$-th digit of $\omega$. We construct the following set
	\[
	E_\omega=\overline{\{x_n(\omega)\}_{n\in\mathbb{N}}},
	\]
	where we define the sequence $\{x_n(\omega)\}_{n\in\mathbb{N}}$ inductively as follows,
	\[
	x_0(\omega)=0, \forall i\geq 1, x_{i}(\omega)=\{x_{i-1}(\omega)+\omega(i)\}.
	\]
	We call $E_\omega$ the \emph{multi-rotation orbit with rotation parameters $\alpha_1,\dots,\alpha_k$ along $\omega.$ } In the case when $\omega\in\Lambda^k$ for an integer $k\geq 1,$ we can still define $x_i(\omega)$ for $i\in\{0,\dots,k\}$ in the above manner. In our notation for multi-rotation orbits and their closures we omit the dependence on $\alpha_1,\dots,\alpha_k.$ A more precise way would be writing $E_\omega(\alpha_1,\dots,\alpha_k)$ and $x_i(\omega,\alpha_1,\dots,\alpha_k)$ instead. However these notations are in some sense superfluous since the dependence on $\alpha_1,\dots,\alpha_k$ is always assumed.	When $k=2,$ it is conventional to call $E_\omega$ an $\alpha\beta$-set but we will not use this terminology in this paper.
	
	The study of $E_\omega$ dates to \cite{E61}. Unlike the single rotation case in which $E_\omega$ must be the interval $[-0.5,0.5]$, when $k=2$, one can find $\alpha_1,\alpha_2,\omega$ such that $E_\omega$ has arbitrarily small Hausdorff dimension. In particular, $E_\omega$ is nowhere dense, see \cite{K79}. Recently, in \cite{FX18}, it was proved that for $k=2$, any pairs of irrational numbers $\alpha_1,\alpha_2,$ and any sequence $\omega$, the difference set $E_\omega-E_\omega$ must contain intervals. In particular, this implies that $\lbox E_\omega\geq 0.5.$ See Section \ref{PRE} for definitions of dimensions. This result allows one to study affine embeddings between self-similar sets with more than one contraction ratios, see \cite{FX18} for more details. In this direction, one can ask whether or not $\lbox E_\omega=1.$
	\begin{conj}\label{feng}
		Let $k\geq 1$ be an integer and let $\Lambda=\{\alpha_1,\dots,\alpha_k\}$ be a set of $k$ rationally independent numbers.
		Let $\omega\in\Lambda^\mathbb{N}$ be an arbitrary sequence over $\Lambda$. Then we have $\lbox E_\omega=1$ where $E_\omega$ is the multi-rotation orbit with parameters in $\Lambda$ along $\omega$.
	\end{conj}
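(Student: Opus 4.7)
The plan is to reduce the geometric question to a combinatorial/Diophantine one. Writing $L(v_1,\dots,v_k) = v_1\alpha_1+\cdots+v_k\alpha_k$ and letting $v_N = (n_1(N),\dots,n_k(N)) \in \mathbb{Z}_{\geq 0}^k$ be the count vector of $\omega(1),\dots,\omega(N)$, we have $x_N(\omega) = \{L(v_N)\}$, so $E_\omega$ is the closure of $L(P) \bmod 1$, where $P = \{v_n\}_{n\geq 0}$ is a nearest-neighbour lattice path whose $n$-th vertex lies on the simplex $\{v_1+\cdots+v_k = n\}$. Since $E_\omega \subset [-0.5,0.5]$, only the bound $\lbox E_\omega \geq 1$ is at stake; equivalently, for every $\varepsilon>0$ and arbitrarily small $\delta$, the number of $\delta$-separated points in $\{L(v_n)\}_{n\leq N}$ (with $N \sim 1/\delta$) should be at least $\delta^{-(1-\varepsilon)}$.

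My first attempt would combine Weyl-type equidistribution of $L$ on $\mathbb{Z}^k$ with a quantitative separation input. In the algebraic case the Schmidt subspace theorem gives $|L(v)-m| \gg_\varepsilon \|v\|^{-(k-1)-\varepsilon}$ for every nonzero $(v,m) \in \mathbb{Z}^{k+1}$, which says that the values $L(\mathbb{Z}^k) \bmod 1$ inside a box are almost optimally separated. Restricted to the path, this forces that among the first $N$ vertices (all of $\ell^1$-norm at most $N$), distinct values of $L \bmod 1$ are separated at scale $\gg N^{-(k-1)-\varepsilon}$, so a pigeonhole over intervals of length $\delta$ forces many intervals to be occupied, driving the lower box dimension upward; this is presumably the route by which the paper improves Feng--Xiong in the algebraic case. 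For general irrational $\alpha_i$ one could alternatively try to bootstrap Feng--Xiong itself: since $E_\omega - E_\omega$ already contains an interval, one might isolate inside $P$ several essentially independent sub-paths whose increment vectors span $\mathbb{Z}^k$ and apply the interval-containment argument iteratively in several directions, pushing the dimension beyond $1/2$.

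The main obstacle is the adversarial nature of $\omega$. The path $P$ is only one-dimensional inside the $k$-dimensional lattice, and a priori nothing prevents $\omega$ from being engineered so that long runs of $P$ align with a direction along which $L$ varies very slowly modulo $1$, clustering the orbit into a narrow set of residues at scale $\delta$. Ruling this out requires a quantitative transversality statement between generic lattice directions and level sets of $L \bmod 1$, exactly the kind of Diophantine input provided by Schmidt but unavailable for Liouville $\alpha_i$. Even granting such an input, the $\liminf$ in the definition of lower box dimension is delicate: one needs good covering lower bounds at \emph{every} scale, not just at the scales $\delta \sim 1/N$ naturally associated with initial segments of $\omega$, so for sequences containing very long blocks of a single symbol one must argue separately that intermediate scales remain well populated. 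This is, I expect, the step at which the full conjecture genuinely resists the approaches sketched above.
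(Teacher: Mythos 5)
You were asked about Conjecture \ref{feng}, which the paper does not prove: it is posed as an open problem, and the paper only establishes partial results towards it (a reduction to recurrent sequences, density of the orbit for finitely recurrent sequences, and the bound $\lbox E_\omega\geq 1/k$ when the rotation parameters are algebraic). Your proposal likewise does not prove it, as you yourself concede in the last paragraph, so the verdict is that there is a genuine gap --- indeed the gap is the entire distance between the known partial results and the conjectured value $1$. Concretely: (i) the Diophantine input you invoke is misquoted; Schmidt's theorem (Theorem \ref{SchmidtThm} in the paper) gives $\|n_1\alpha_1+\cdots+n_k\alpha_k\|\gg_\varepsilon (\max_i|n_i|)^{-k-\varepsilon}$ for algebraic, rationally independent $\alpha_i$, not the exponent $-(k-1)-\varepsilon$ you wrote. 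Even with the correct exponent, the separation-plus-pigeonhole argument you sketch yields roughly $r^{-1/(k+\varepsilon)}$ occupied intervals at scale $r$, i.e.\ $\lbox E_\omega\geq 1/k$; this is precisely the paper's Theorem \ref{Th3}, which the paper proves by locating a primitive cycle in a transition graph on the $t^{-1}$-intervals and feeding the resulting relation $\|\sum_i n_i\alpha_i\|\leq 2t^{-1}$ with $\sum_i n_i=s$ into Schmidt's bound. So your route recovers (modulo the exponent slip) the known $1/k$ bound in the algebraic case, and nothing more. (ii) The proposed bootstrap of Feng--Xiong for general irrational parameters is only a hope, not an argument: no mechanism is given that converts the statement ``$E_\omega-E_\omega$ contains an interval'' applied to several sub-paths into a covering lower bound for $E_\omega$ itself exceeding $1/2$, and a sub-path of $\omega$ is not a multi-rotation orbit in the paper's sense, so the cited result does not even apply to it directly.

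On your closing worry about the $\liminf$: this is a real issue for the way you set things up (initial segments of length $N\sim 1/\delta$), but it is not the true obstruction, and the paper's partial argument sidesteps it entirely by working with the full orbit closure at every scale $t^{-1}$ --- the infinite walk on the transition graph restricted to the intervals meeting $E_\omega$ exists at every $t$, so the cycle argument gives a lower bound on $N(E_\omega,t^{-1})$ for all $t$, which is why one gets a genuine lower box dimension bound. The actual obstruction to the conjecture is the one you half-identify: for an adversarially chosen $\omega$, one needs to rule out that the orbit clusters into few residues at some scale, and the only quantitative transversality available (Schmidt-type lower bounds on $\Phi_\Lambda$, or the sign-restricted variant $\phi_\Lambda$ discussed in the paper's Section \ref{FUR}) caps the conclusion at exponents like $1/k$ or, for special parameter choices, around $0.6$; for Liouville-type parameters even these inputs fail, which is why the paper's unconditional results go through recurrence (Theorems \ref{Th1} and \ref{Th2}) rather than Diophantine approximation. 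In short: you have correctly identified the Diophantine route behind the paper's Theorem \ref{Th3}, but the statement you were given is a conjecture that neither the paper nor your sketch proves.
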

	
	 So far, the best result in this direction is \cite[Theorem 1.5]{FX18}.
	\begin{thm*}[\cite{FX18},Theorem 1.5 ]
		Let $k\geq 2$ be an integer and let $\Lambda=\{\alpha_1,\dots,\alpha_k\}$ be a set of $k$ rationally independent numbers.
			Let $\omega\in\Lambda^\mathbb{N}$ be an arbitrary sequence over $\Lambda$. If $k\geq 3$, then we have $\lbox E_\omega\geq 1/(k+1)$ and if $k=2$ then we have $\lbox E_\omega\geq 1/2.$
	\end{thm*}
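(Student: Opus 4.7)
The plan is to reduce the problem to a counting question about a combinatorial walk in $\mathbb{Z}^k$ and then to a Diophantine estimate on integer linear combinations of $\alpha_1,\dots,\alpha_k$. The starting observation is that if $n_i(j)$ denotes the number of occurrences of $\alpha_i$ among $\omega(1),\dots,\omega(j)$, then
\[
x_j(\omega) \equiv \sum_{i=1}^{k} n_i(j)\,\alpha_i \pmod 1,
\]
and the map $j\mapsto n(j)=(n_1(j),\dots,n_k(j))\in\mathbb{Z}^k_{\geq 0}$ is strictly monotone in its $\ell^1$-norm. Hence two orbit points $x_{j_1}(\omega), x_{j_2}(\omega)$ are within $\delta$ in $\mathbb{T}$ if and only if the integer vector $m:=n(j_1)-n(j_2)$ lies in
\[
B(N,\delta):=\{m\in\mathbb{Z}^k:\|m\|_\infty\leq N,\ \|\textstyle\sum_i m_i\alpha_i\|_\mathbb{T}<\delta\},
\]
where $N$ bounds $j_1,j_2$ from above.

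The heart of the argument is the following quantitative scheme. Fix a small scale $\delta>0$, and set $N:=\lfloor\delta^{-1/(k+1)}\rfloor$. For each fixed $m\in\mathbb{Z}^k$, the injectivity of $j\mapsto n(j)$ implies that there are at most $N$ pairs $(j_1,j_2)$ with $1\le j_2<j_1\le N$ and $n(j_1)-n(j_2)=m$. Summing over $m\in B(N,\delta)$ bounds the number of $\delta$-close pairs among $x_1(\omega),\dots,x_N(\omega)$ by $N\cdot|B(N,\delta)|$. By a Cauchy--Schwarz / energy inequality, if $f(\text{bin})$ counts orbit points in each $\delta$-bin, then the number of distinct bins hit is at least $N^2/\sum f^2\gtrsim N/(1+|B(N,\delta)|)$, hence
\[
N(\delta,E_\omega) \gtrsim \frac{N}{1+|B(N,\delta)|}.
\]

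The remaining task, and the main technical obstacle, is to show that at the chosen scale $|B(N,\delta)|$ stays bounded (or at worst grows subpolynomially in $N$). The heuristic reason is that by a standard slab/volume argument the expected count is $\asymp 1+\delta N^k$, and with $N=\delta^{-1/(k+1)}$ this gives $\delta N^k = \delta^{1/(k+1)}\to 0$. To make this rigorous for arbitrary rationally independent $\alpha_1,\dots,\alpha_k$ (without any Diophantine hypothesis), I would apply Minkowski's second theorem to the lattice in $\mathbb{R}^{k+1}$ generated by $e_1,\dots,e_k$ and $(\alpha_1,\dots,\alpha_k,1)$, translating a bound on successive minima into the bound $|B(N,\delta)|=O(1)$ whenever $\delta N^k$ is small enough. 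This is precisely the delicate point: the successive-minima argument must be carried out uniformly, because without any arithmetic control on the $\alpha_i$ there can be adversarial scales where short vectors cluster, and the choice $N=\delta^{-1/(k+1)}$ (rather than the naive $N=\delta^{-1/k}$) is made precisely to have a safety margin absorbing such clustering.

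Putting the three pieces together yields $N(\delta,E_\omega)\gtrsim\delta^{-1/(k+1)}$ for all sufficiently small $\delta$ and hence $\lbox E_\omega\geq 1/(k+1)$. The improved bound $\lbox E_\omega\geq 1/2$ for $k=2$ comes from the fact that $\mathbb{T}^1$ is one-dimensional so the slab in Minkowski's theorem is actually a line, and the continued fraction expansion of the single ratio $\alpha_1/\alpha_2$ forces $|B(N,\delta)|=O(1)$ already at the larger scale $N=\delta^{-1/2}$; substituting this back into the energy bound gives $N(\delta,E_\omega)\gtrsim\delta^{-1/2}$, which is the sharper exponent stated in the theorem.
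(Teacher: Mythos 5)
Your reduction to counting $\delta$-close pairs is fine (the injectivity of $j\mapsto n(j)$ and the Cauchy--Schwarz energy step are correct), but the step you yourself flag as delicate is where the proof genuinely breaks: it is \emph{false} that $|B(N,\delta)|=O(1)$ whenever $\delta N^k$ is small, and Minkowski's second theorem cannot deliver this. Minkowski controls the product $\lambda_1\cdots\lambda_{k+1}$ of the successive minima, whereas the lattice-point count in the box is of order $\prod_i\max(1,\lambda_i^{-1})$; a single tiny $\lambda_1$ compensated by a huge $\lambda_{k+1}$ keeps the volume (and the product) under control while making the count enormous. Concretely, take $\alpha_1$ a Liouville number with $\|q\alpha_1\|<q^{-A}$ for some huge $A$ and $q$ arbitrarily large, and the other $\alpha_i$ arbitrary rationally independent numbers. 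At the scale $\delta=N^{-(k+1)}$ with $N$ up to about $q^{(A+1)/(k+2)}$, all the vectors $m=(jq,0,\dots,0)$ with $1\le j\le N/q$ lie in $B(N,\delta)$, so $|B(N,\delta)|\ge N/q$, which can be as large as $N^{1-\epsilon}$. At such scales your bound only yields $N(\delta,E_\omega)\gtrsim N^{\epsilon}$, and since the lower box dimension is a $\liminf$ over \emph{all} scales, these adversarial scales destroy the conclusion. The ``safety margin'' exponent $1/(k+1)$ does not absorb this clustering, because the clustering comes from arithmetic pathology of the $\alpha_i$, not from volume considerations. (Your $k=2$ remark has the same defect: the quantity involved is $\|m_1\alpha_1+m_2\alpha_2\|$ for the pair, not the continued fraction of $\alpha_1/\alpha_2$, and with $\alpha_1$ Liouville one gets $|B(N,\delta)|\ge N^{2/3}$ at $N=\delta^{-1/2}$.)

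It is worth noting that the theorem you are proving is quoted in this paper from Feng--Xiong and is not proved here; but your strategy is essentially the counting-plus-Diophantine scheme of Section \ref{dio} of this paper, which only succeeds under an arithmetic hypothesis: there, algebraicity of the $\alpha_i$ plus Schmidt's theorem (Theorem \ref{SchmidtThm}) gives the lower bound $\Phi_\Lambda(s)\ge C_\delta s^{-k-\delta}$ that plays exactly the role of your missing estimate, and yields the stronger bound $1/k$ --- while for general tuples only an almost-everywhere statement for the \emph{upper} box dimension is obtained (Theorem \ref{LebAlmost}). Feng and Xiong's actual proof of the quoted bounds avoids any Diophantine control: for $k=2$ they show $E_\omega-E_\omega$ contains an interval, whence $1=\lbox(E_\omega-E_\omega)\le 2\lbox E_\omega$, and the $k\ge 3$ case is handled by a different combinatorial argument on the orbit. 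So to salvage your approach you would either have to impose a Diophantine condition (recovering results like Theorem \ref{Th3} rather than the stated theorem) or replace the bound on $|B(N,\delta)|$ by an argument that exploits the special structure of the realizable differences $n(j_1)-n(j_2)$, which the volume/successive-minima argument does not see.
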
	
	\subsection{Multi-rotation orbits and recurrent sequences}
	Our first observation is that instead of an arbitrary sequence, it is enough to study sequences with additional structures, for example, being recurrent. In what follows, we use the term \emph{word} to mean a finite length sequence over a digit set. Given a digit set $\Lambda$ and a sequence $\omega\in\Lambda^{\mathbb{N}}$, a word $a$ in $\omega$ is an element of $\Lambda^{k}$ for an integer $k\geq 1$ and there is an number $i\geq 1$ such that for each $j\in\{1,\dots,k\}$, the $j$-th digit of $a$ is the same as the $(i+j)$-th digit of $\omega.$ The following definition of recurrent sequence is not commonly used in literatures, one can find more details in \cite[Chapter 1 Section 3]{F81}.  
	
	\begin{defn}\label{ReCur}
		Let $\Lambda$ be a finite set of digits. A sequence over $\Lambda$ is called recurrent if there exist a sequence $\{a_i\}_{i\in\mathbb{N}}$ of finite length words (we call them the building words for $\omega$) over digits $\Lambda$ such that $\omega$ can be obtained as follows. First, we take $\omega_0=a_0$. For each $i\geq 1$ we take $\omega_i=\omega_{i-1}a_i\omega_{i-1}.$ We write $\omega$ to be the limit $\lim_{i\to\infty} \omega_i.$ The first several digits of $\omega$ can be listed out as  follows,
		\[
		((a_0 a_1 a_0)a_2(a_0 a_1 a_0))a_3((a_0 a_1 a_0)a_2(a_0 a_1 a_0))a_4\dots.
		\] 
	\end{defn}
	
	One of the most famous examples of recurrent sequences is the Thue-Morse sequence: $$abbabaabbaababbabaababbaabbabaab\dots.$$
	More precisely, one can obtain the above sequence by starting with a digit $a$ and apply the substitution rule $a\to ab$ and $b\to ba$ inductively.
	
	\begin{thm}\label{Th1}
		Under the hypothesis of Conjecture \ref{feng} the followings are equivalent:
		\begin{itemize}
			\item{1}: $\lbox E_\omega=1$ for each sequence $\omega\in\Lambda^\mathbb{N}.$
			\item{2}: $\lbox E_\omega=1$ for each recurrent sequence $\omega\in\Lambda^\mathbb{N}.$
		\end{itemize}
	\end{thm}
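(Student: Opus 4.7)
The direction (1)$\Rightarrow$(2) is trivial since recurrent sequences form a subfamily of $\Lambda^{\mathbb N}$. For (2)$\Rightarrow$(1), my plan is to associate to any $\omega\in\Lambda^{\mathbb N}$ a recurrent sequence $\omega^{\dagger}$ and a point $T^{*}$ such that $T^{*}+E_{\omega^{\dagger}}\subseteq E_{\omega}$ on the circle (all additions taken modulo $1$). Because the rotation $x\mapsto\{x+T^{*}\}$ is an isometry, the lower box dimension is invariant under it, so this inclusion yields $\lbox E_\omega\ge\lbox E_{\omega^{\dagger}}$; applying hypothesis (2) to the recurrent sequence $\omega^{\dagger}$ then forces $\lbox E_\omega=1$, as required.

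The first ingredient is to pass from $\omega$ to a uniformly recurrent element of its orbit closure. Let $\sigma$ denote the one-sided shift on $\Lambda^{\mathbb N}$ and set $X_\omega=\overline{\{\sigma^n\omega:n\ge 0\}}$. By Zorn's Lemma, the compact shift-invariant system $X_\omega$ contains a minimal closed shift-invariant subset $M$, and every $\omega^{*}\in M$ is uniformly recurrent in the sense that every prefix of $\omega^{*}$ reappears in $\omega^{*}$ with bounded gap. Moreover since any element of $X_\omega$ is a product-topology limit of shifts $\sigma^{n}\omega$, every finite subword of $\omega^{*}$ appears as a subword of $\omega$.

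Next I promote $\omega^{*}$ to a recurrent sequence in the sense of Definition \ref{ReCur}. Choose $\omega_0$ to be a short prefix of $\omega^{*}$; given a prefix $\omega_i$ of $\omega^{*}$, uniform recurrence furnishes an integer $p_i>|\omega_i|$ at which $\omega_i$ occurs again, and I let $a_{i+1}$ be the gap word occupying positions $|\omega_i|+1,\ldots,p_i$ of $\omega^{*}$, so that $\omega_{i+1}:=\omega_i\,a_{i+1}\,\omega_i$ is again a prefix of $\omega^{*}$. Iterating, the limit $\omega^{\dagger}:=\lim_i\omega_i$ coincides with $\omega^{*}$ and is recurrent in the sense of Definition \ref{ReCur}. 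By the previous paragraph each block $\omega_i$ appears as a subword of $\omega$, say starting at some position $k_i$.

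For the translation step, the defining recursion for $x_n$ yields the identity $x_{k_i+j}(\omega)=\{x_{k_i}(\omega)+x_j(\omega^{\dagger})\}$ for $0\le j\le|\omega_i|$, since $\omega$ and $\omega^{\dagger}$ agree letter by letter on this stretch. Setting $T_{k_i}:=x_{k_i}(\omega)\in(-0.5,0.5]$, this says $\{T_{k_i}+f:f\in F_i\}\subseteq E_\omega$ where $F_i$ is the finite orbit along the prefix $\omega_i$. By compactness of the circle, along a subsequence $T_{k_i}\to T^{*}$; since $E_\omega$ is closed and the nested sets $F_i$ have union dense in $E_{\omega^{\dagger}}$, passing to the limit one obtains $T^{*}+E_{\omega^{\dagger}}\subseteq E_\omega$, which is the desired containment. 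The main point on which the argument hinges is the coordination between uniform recurrence and the palindromic-style construction in Definition \ref{ReCur}; once the iterative next-occurrence procedure above is verified, the remaining ingredients (Zorn, shift approximation, circle compactness, isometry-invariance of $\lbox$) are standard.
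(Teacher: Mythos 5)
Your proposal is correct and follows essentially the same route as the paper: the paper also passes to a (uniformly) recurrent point in the shift-orbit closure via Birkhoff's recurrence theorem and then invokes its Lemma \ref{LimitShift}, which is exactly your block-matching/limit-of-translates argument giving $c+E_{\omega^*}\bmod 1\subset E_\omega$, together with translation-invariance of $\lbox$. The only difference is that you spell out explicitly (via the prefix-reoccurrence construction $\omega_{i+1}=\omega_i a_{i+1}\omega_i$) why a uniformly recurrent point is recurrent in the sense of Definition \ref{ReCur}, a step the paper leaves implicit.
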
 
	
	The most important ingredient for proving the above result is Birkhoff's recurrent theorem for topological dynamical systems, see \cite[Chapter 2]{PY} and \cite[Chapter 1  Section 3,4,5]{F81}. Morally, this result allows us to concentrate on multi-rotation orbits along recurrent sequences. This seems to be not a big progress. One example that illustrates the strength of recurrence is the following theorem which can be viewed as a generalization of the classical irrational rotation result mentioned at the beginning of this section.

	\begin{defn}
		We say that $\omega$ is finitely recurrent if it is recurrent in the sense of Definition \ref{ReCur} and furthermore the building words $a_i,i\geq 0$ have a uniform bound of length. That is to say, there is an integer $M$ such that the length of each $a_i$ is at most $M.$ An equivalent statement would be that $\{a_i\}_{i\geq 0}$ is a finite set.
	\end{defn}
	
	\begin{thm}[Generalization of irrational rotation]\label{Th2}
		Under the hypothesis of Conjecture \ref{feng}, let $\omega\in\Lambda^\mathbb{N}$ be a finitely recurrent sequence. Then $\{x_i(\omega)\}_{i\geq 1}$ is dense in $[-0.5,0.5].$
	\end{thm}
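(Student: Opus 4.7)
The plan is to exploit the doubling self-similarity that the relation $\omega_i = \omega_{i-1} a_i \omega_{i-1}$ forces, together with the uniform recurrence of $\omega$ that follows from the finiteness of the building-word set, in order to show that the orbit closure $E := \overline{\{x_n(\omega) : n \geq 1\}}$ is invariant under translation by an irrational, which will force $E = [-0.5,0.5]$. Set $L_i := |\omega_i|$, $T_i := x_{L_i}(\omega)$, and $\delta_i := T_{i-1} + S(a_i) \pmod{1}$, where $S(a)$ denotes the digit sum of a word $a$. The decomposition $\omega_i = \omega_{i-1} a_i \omega_{i-1}$ yields the block identity $x_{L_{i-1}+|a_i|+j}(\omega) = x_j(\omega) + \delta_i$ for $1 \leq j \leq L_{i-1}$, so the finite set $A_i := \{x_1(\omega),\ldots,x_{L_i}(\omega)\}$ satisfies $A_i \supseteq A_{i-1} \cup (A_{i-1}+\delta_i)$; iterating, $A_i$ contains a translate of $A_0$ by every subset sum $\sum_{\ell \in J}\delta_\ell$ with $J \subseteq \{1,\ldots,i\}$.

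Next, finite recurrence supplies further translates. Each prefix $\omega_i$ appears in $\omega$ at a syndetic set $P_i$ of starting positions (with gap bounded by $L_i + M$, where $M$ bounds the building-word lengths), and at each $p \in P_i$ the cocycle identity gives $x_{p-1+j}(\omega) = x_{p-1}(\omega) + x_j(\omega)$ for $1 \leq j \leq L_i$; hence $E \supseteq R_i + A_i$ with $R_i := \{x_{p-1}(\omega) : p \in P_i\}$. The sets $R_{i+1} \subseteq R_i$ are nested, so $C := \bigcap_i \overline{R_i}$ is nonempty. For any $c \in C$, pick $p_k \in P_{i_k}$ with $i_k \to \infty$ and $x_{p_k-1} \to c$; then $\sigma^{p_k-1}\omega \to \omega$ in the product topology of $\Lambda^{\mathbb{N}}$ because the two sequences agree on the first $L_{i_k}$ letters, and continuity of $x_j(\cdot)$ on $\Lambda^{\mathbb{N}}$ yields $c + x_j(\omega) \in E$ for every $j$, so $c + E \subseteq E$. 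Iterating, $E + nc \subseteq E$ for every $n \geq 1$; if $c$ is irrational, then $(\{nc\})_{n \geq 1}$ is dense in $[-0.5,0.5]$, and passing to the limit in $E + n_k c \subseteq E$ along any convergent subsequence $\{n_k c\} \to d$ gives $E + d \subseteq E$ for every $d$, forcing $E = [-0.5,0.5]$ because $E$ is a nonempty closed fully translation-invariant set.

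The main obstacle is therefore the production of an irrational $c \in C$. Because the single-element subset sum $\delta_\ell$ lies in $R_i$ for every $\ell > i$, every accumulation point of $\{\delta_\ell\}_\ell$ belongs to $C$. The key tool will be the doubling recursion $\delta_{\ell+1} = 2\delta_\ell + \Delta_\ell \pmod{1}$ with $\Delta_\ell := S(a_{\ell+1}) - S(a_\ell)$ taking only finitely many values, each a $\mathbb{Z}$-linear combination of $\alpha_1,\ldots,\alpha_k$. If some accumulation point of $\{\delta_\ell\}$ is irrational, we are done. Otherwise all accumulation points are rational, and pigeonholing over $\Delta_{\ell_k} = \Delta$ fixed and $\delta_{\ell_k} \to r \in \mathbb{Q}/\mathbb{Z}$ gives $\delta_{\ell_k+1} \to 2r + \Delta$, which must again be rational; rational independence of $\{1,\alpha_1,\ldots,\alpha_k\}$ then forces $\Delta = 0$, so $S(a_\ell)$ is eventually a common constant $s$. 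In this degenerate subcase I would analyse directly the coarser orbit subsequence $z_m$ of values at the ends of successive $\omega_0$-blocks in $\omega$: counting the contributions of each $a_\ell$ in the dyadic spacer pattern (the $m$-th spacer equals $a_{v_2(m)+1}$) together with eventual constancy $S(a_\ell) = s$ gives $z_m = m\gamma' + O(1)$, where $\gamma'$ is a positive rational combination of $\alpha_1,\ldots,\alpha_k$ and hence irrational by rational independence; classical Weyl equidistribution of $(\{m\gamma'\})_{m \geq 1}$ together with a standard bounded-perturbation argument then makes $\{z_m\} \subseteq \{x_n(\omega) : n \geq 1\}$ dense in $[-0.5,0.5]$, completing the proof. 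The delicate part of the argument is thus this Diophantine interplay between rational independence and the doubling recursion.
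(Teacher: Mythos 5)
Your overall strategy is sound and, up to the last step, runs parallel to the paper's own argument: both proofs hinge on the translation-invariance $c+E\subseteq E$ for limit points $c$ of the distinguished subsequence (your $\delta_\ell$ are exactly the values $x_{L_\ell}(\omega)$ at the ends of the prefixes $\omega_{\ell-1}a_\ell$ that the paper uses), and on the doubling recursion $\delta_{\ell+1}=2\delta_\ell+\Delta_\ell \pmod 1$, where finite recurrence makes $\Delta_\ell$ range over finitely many integer combinations of $\alpha_1,\dots,\alpha_k$, so that rational independence forces $S(a_\ell)$ to be eventually constant in the bad case. The steps up to that point check out: $\delta_\ell\in R_i$ for $\ell>i$, the nestedness of the $R_i$ and nonemptiness of $C$, the continuity argument giving $c+E\subseteq E$, and the deduction $\Delta=0$ from rationality of all accumulation points are all correct.

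The genuine gap is the final step of the degenerate case. From $z_m=m\gamma'+O(1)\pmod 1$ you cannot conclude density: modulo $1$ a bounded perturbation is no constraint at all, and there is no ``standard bounded-perturbation argument'' of the kind you invoke. For instance, taking $E_m=-\{m\gamma'\}$ gives $z_m=m\gamma'+E_m\in\mathbb{Z}$ for every $m$, so the reduced values are identically $0$ even though $\gamma'$ is irrational and $|E_m|\leq 1/2$. What saves your computation is structure that you computed and then discarded: once $S(a_\ell)=s$ for all $\ell\geq N$, the exceptional spacers are exactly those $a_{v_2(j)+1}$ with $2^{N-1}\nmid j$, and their counts among $j<m$ depend on $m$ only through the quantities $\lfloor (m-1)/2^t\rfloor$ with $t\leq N-1$; consequently $z_m-m\gamma'$ is a function of $(m-1)\bmod 2^{N-1}$ alone, not merely a bounded error. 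Hence along each arithmetic progression $m\equiv r\pmod{2^{N-1}}$ the sequence $z_m$ is exactly an orbit of the rotation by the irrational number $2^{N-1}\gamma'$ plus a constant, and density follows from the classical single-rotation result. With that repair your proof closes, and it is a genuinely different route from the paper's in the rational case: the paper uses a nonzero rational limit point $c=p/q$ to make $E_\omega$ periodic of period $1/q$, rescales and iterates so as to reduce to the case $x_{L_i}(\omega)\to 0$, where the same doubling recursion forces the building words to have eventually identical digit counts, making $\omega$ essentially periodic so that $E_\omega$ contains a full irrational-rotation orbit. Your direct dyadic counting avoids that rescaling reduction, but only works once the periodic-error observation above replaces the false bounded-perturbation principle.
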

	
	\subsection{Multi-rotation orbits and Diophantine approximation}
	Our next result improves \cite[Theorem 1.5]{FX18} when the rotation parameters $\alpha_1,\dots,\alpha_k$ are rationally independent algebraic numbers. 
	
	\begin{thm}\label{Th3}
		Under the hypothesis of Conjecture \ref{feng}, suppose further that $\alpha_1,\dots, \alpha_k$ are algebraic numbers, then we have for all $\omega\in \Lambda^{\mathbb{N}}$ that
		\[
		\lbox E_\omega\geq \frac{1}{k}.
		\]
	\end{thm}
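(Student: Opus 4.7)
My plan is to extract a uniform separation between distinct orbit points by invoking Schmidt's subspace theorem, and then convert this into a lower bound on the covering number through an elementary counting argument. The Diophantine input is the following classical consequence of Schmidt's subspace theorem applied to linear forms in algebraic numbers: since $\alpha_1,\dots,\alpha_k$ are algebraic and $1,\alpha_1,\dots,\alpha_k$ are linearly independent over $\mathbb{Q}$, for every $\epsilon>0$ there exists $C_\epsilon>0$ such that for every nonzero $(m_1,\dots,m_k)\in\mathbb{Z}^k$,
\[
\bigl|\{m_1\alpha_1+\dots+m_k\alpha_k\}\bigr| \;\geq\; C_\epsilon\bigl(\max_{1\leq l\leq k} |m_l|\bigr)^{-k-\epsilon}.
\]
This is the sharp Roth--Schmidt exponent for linear forms and is strictly stronger than the Dirichlet-type bound one has without the algebraicity assumption.

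Unwinding the inductive definition gives $x_i(\omega)=\bigl\{\sum_{l=1}^k n_l^{(i)}\alpha_l\bigr\}$, where $n_l^{(i)}:=\#\{1\leq j\leq i : \omega(j)=\alpha_l\}$, so $\sum_l n_l^{(i)}=i$ and $0\leq n_l^{(i)}\leq N$ whenever $i\leq N$. For $1\leq j<i\leq N$ the circle distance $|x_i(\omega)-x_j(\omega)|_{\mathbb{T}}$ equals $|\{m\cdot\alpha\}|$ with $m:=n^{(i)}-n^{(j)}$. Since $\sum_l m_l=i-j\neq 0$, the vector $m$ is nonzero and satisfies $|m_l|\leq N$, so the displayed bound forces the uniform separation
\[
|x_i(\omega)-x_j(\omega)|_{\mathbb{T}} \;\geq\; C_\epsilon N^{-k-\epsilon}.
\]

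Given small $\delta>0$, I would take $N:=\lfloor \delta^{-1/(k+\epsilon)}\rfloor$. Then any arc of length $\delta$ on $\mathbb{T}$ contains at most $1+\delta\bigl(C_\epsilon N^{-k-\epsilon}\bigr)^{-1}=O_\epsilon(1)$ of the first $N$ orbit points, so any cover of $E_\omega$ by sets of diameter at most $\delta$ requires at least a constant multiple of $N\asymp \delta^{-1/(k+\epsilon)}$ pieces. This gives $\lbox E_\omega\geq 1/(k+\epsilon)$, and letting $\epsilon\to 0$ yields $\lbox E_\omega\geq 1/k$. The only nontrivial ingredient is Schmidt's subspace theorem, used as a black box; the rest of the argument is elementary counting. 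The main obstacle to extending this method beyond algebraic $\alpha_l$ is precisely the failure of such uniform lower bounds for Liouville-type numbers, which is why the improvement over \cite{FX18} is tied to the algebraicity hypothesis.
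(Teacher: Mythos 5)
Your argument is correct, and it reaches the bound by a genuinely more direct route than the paper. You exploit the explicit formula $x_i(\omega)=\{\sum_l n_l^{(i)}\alpha_l\}$ and apply the Schmidt-type lower bound to the difference vectors $n^{(i)}-n^{(j)}$, obtaining a uniform pairwise separation $\gtrsim C_\epsilon N^{-k-\epsilon}$ of the first $N$ orbit points and then counting; all steps check out (the difference vector is nonzero because its coordinate sum is $i-j$, its entries are bounded by $N$, and the passage from ``finitely many solutions'' in Schmidt's theorem to a uniform constant $C_\epsilon$ uses only the rational independence of $1,\alpha_1,\dots,\alpha_k$, exactly as in the paper). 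The paper instead covers $[-0.5,0.5]$ by $t$ intervals, builds a directed transition graph on the intervals meeting $E_\omega$, and extracts a \emph{primitive cycle}: its length $s$ simultaneously lower-bounds the number of covering intervals and produces a relation $\|\sum_i n_i\alpha_i\|\leq 2t^{-1}$ with non-negative $n_i$ summing to $s$, to which the same Schmidt bound is applied. What the paper's cycle formulation buys is the sign restriction: it ties the dimension bound to the function $\phi_\Lambda$ (non-negative coefficients) rather than $\Phi_\Lambda$, which is what powers the sharper conclusions in Section \ref{FUR} (e.g.\ the golden-ratio exponent for special pairs). Your method is shorter and avoids the graph machinery; moreover, since the occurrence counts $n_l^{(i)}$ are non-decreasing in $i$, your difference vectors for $i>j$ are automatically non-negative with coordinate sum $i-j$, so in fact your separation argument also connects to $\phi_\Lambda$ and would support those refinements as well --- a point worth stating explicitly if you want your proof to subsume the paper's later discussion. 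One cosmetic remark: what is needed (and what the paper cites as \cite{S80}) is Schmidt's 1970 theorem on linear forms in algebraic numbers; calling it the subspace theorem is harmless but slightly heavier artillery than required.
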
 
	It turns out that the study of box dimensions of multi-rotation orbits is closely related to Diophantine approximations of linear forms. We will discuss more about this connection in Section \ref{FUR}. It is natural to ask whether one can get rid of the algebraic condition in the above theorem. We will see (by Theorem \ref{LebAlmost}) that for Lebesgue almost all $k$-tuples $\alpha_1,\dots,\alpha_k$ and all $\omega\in\{\alpha_1,\dots,\alpha_k\}^{\mathbb{N}}$ we have
\[
\ubox E_\omega\geq \frac{1}{k}.
\]
	This still leaves the general question open. Note that the following conjecture is much weaker than Conjecture \ref{feng}. We pose it here because it looks very natural and should be much easier to prove than Conjecture \ref{feng}. 
	
		\begin{conj}
		Let $k\geq 2$ be an integer and let $\lambda=\{\alpha_1,\dots,\alpha_k\}$ be a set of $k$ rationally independent numbers.
		Let $\omega\in\Lambda^\mathbb{N}$ be an arbitrary sequence over $\Lambda$. Then we have $\lbox E_\omega\geq 1/k$.
	\end{conj}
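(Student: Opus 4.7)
The plan is to follow the Diophantine-approximation strategy underlying Theorem \ref{Th3} and to weaken the step where algebraicity of the $\alpha_i$ was used. First, by Theorem \ref{Th1} I may restrict attention to recurrent $\omega\in\Lambda^{\mathbb{N}}$. Fix a scale $\epsilon>0$ and a length $N\approx \epsilon^{-1/k}$. Each prefix value satisfies $x_j(\omega)=\{n^{(j)}_1\alpha_1+\cdots+n^{(j)}_k\alpha_k\}$, where the count vector $(n^{(j)}_1,\dots,n^{(j)}_k)$ is a monotone lattice path with $\sum_l n^{(j)}_l=j$. If $E_\omega$ admits a cover by $M$ intervals of length $\epsilon$, pigeonhole yields a cluster of at least $N/M$ indices in $\{0,1,\dots,N\}$, and pairwise differences inside the cluster furnish integer vectors $\mathbf{m}\in[-N,N]^k$ with $\|m_1\alpha_1+\cdots+m_k\alpha_k\|\le 2\epsilon$, where $\|\cdot\|$ denotes distance to the nearest integer.

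The heart of the matter is therefore to upper bound the cardinality of
\[
S(N,\epsilon)=\{\mathbf{m}\in[-N,N]^k:\|m_1\alpha_1+\cdots+m_k\alpha_k\|\le 2\epsilon\}.
\]
Heuristically $|S(N,\epsilon)|=O(N^k\epsilon+1)$, which at $N=\epsilon^{-1/k}$ equals $O(1)$ and would give the desired $M\gtrsim \epsilon^{-1/k}$ and hence $\lbox E_\omega\ge 1/k$. For algebraic $\alpha_i$ this heuristic is delivered by Schmidt's subspace theorem, and this is essentially the route of Theorem \ref{Th3}. In general, Minkowski's second theorem controls $|S(N,\epsilon)|$ via the successive minima of the $(k+1)$-dimensional lattice $\{(\mathbf{m},\sum_l m_l\alpha_l-p):\mathbf{m}\in\mathbb{Z}^k,\,p\in\mathbb{Z}\}$ inside the box $[-N,N]^k\times[-\epsilon,\epsilon]$, but for Liouville-type $\alpha_i$ some minima can collapse at particular scales, letting $|S(N,\epsilon)|$ greatly exceed $O(1)$.

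Two routes for handling the Liouville case are worth trying. The first exploits the monotone staircase structure of the prefix counts: each cluster difference additionally satisfies $\sum_l m_l = j-i$ and $m_l\ge -n^{(i)}_l$, so not every lattice vector in $S(N,\epsilon)$ occurs as a cluster difference, and one may hope that the resonant directions responsible for a large $|S(N,\epsilon)|$ are excluded by these sign-and-sum constraints together with the recurrence of $\omega$ supplied by Theorem \ref{Th1}. The second route works along a carefully chosen subsequence $\epsilon_m\to 0$ avoiding the resonance scales of the $\alpha_i$; by itself this yields only the upper-box analogue (compare with Theorem \ref{LebAlmost}), so it must be coupled with a monotonicity argument on covering numbers to lift the estimate from $\ubox$ to $\lbox$.

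The principal obstacle I anticipate is the Liouville regime: demonstrating that even when the $\alpha_i$ admit exceptionally good simultaneous rational approximations, the corresponding nearly-vanishing linear combinations cannot all be realised as differences of prefix-count vectors. Making the monotone-staircase constraint interact effectively with transference estimates from the geometry of numbers, uniformly over all recurrent $\omega$, is where the main technical work should lie.
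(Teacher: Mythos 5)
You have not proved the statement, and neither does the paper: this is posed there as an open conjecture, with only partial results in its direction (Theorem \ref{Th3} for algebraic parameters via Schmidt's theorem, and Theorem \ref{LebAlmost}, which gives only $\ubox E_\omega\geq 1/k$ for Lebesgue-almost-every tuple). Your reduction is essentially the paper's Section \ref{dio} argument in different clothing: a covering of $E_\omega$ by $\epsilon$-intervals forces a nontrivial integer vector $\mathbf{m}$ with controlled size and $\|m_1\alpha_1+\cdots+m_k\alpha_k\|\leq 2\epsilon$, so everything hinges on a lower bound for the quantity the paper calls $\Phi_\Lambda(s)$ (or better, the sign-restricted $\phi_\Lambda(s)$ of Section \ref{FUR}). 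The genuine gap is exactly the one you flag but do not close: for Liouville-type parameters $\Phi_\Lambda(s)$ can be far smaller than $s^{-k}$ at all relevant scales, and no step in your proposal shows that the resulting abundance of near-resonant vectors is incompatible with being realised as differences of prefix-count vectors. This is not a technicality to be absorbed later; it is the entire content of the conjecture, and the paper's explicit description of the exceptional set $A^c$ shows the authors could not remove it either.

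Two further points about your suggested repairs. The sign-and-sum constraints on cluster differences are precisely the passage from $\Phi_\Lambda$ to $\phi_\Lambda$ discussed in the paper's Section \ref{FUR}; the known lower bounds on $\phi_\Lambda$ (Schmidt, Roy) hold only for \emph{specially constructed} tuples, not for all rationally independent ones, so invoking the staircase structure alone cannot give a uniform bound, and for $k=2$ the optimal sign-restricted exponent is the golden ratio rather than $k$, so even in the best understood case the constraint does not simply restore the heuristic count. Second, your scale-subsequence route cannot be lifted from $\ubox$ to $\lbox$ by ``monotonicity of covering numbers'': $N(E_\omega,\epsilon)$ at a good scale $\epsilon_m$ controls $N(E_\omega,\epsilon)$ at intermediate scales only up to the ratio of the scales, which is exactly the loss that makes Theorem \ref{LebAlmost} an upper-box statement. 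So the proposal is a reasonable research plan that correctly identifies the Diophantine bottleneck, but as a proof of the conjecture it is incomplete at its central step.
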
	
	\section{Preliminaries}\label{PRE}
	\subsection{Dimensions}
	We list here some basic definitions of dimensions mentioned in the introduction.  For more details, see \cite[Chapters 2,3]{Fa} and \cite[Chapters 4,5]{Ma1}. We shall use $N(F,r)$ for the minimal covering number of a set $F$ in $\mathbb{R}^n$ with balls of side length $r>0$. 
	
	\subsubsection{Hausdorff dimension}
	
	Let $g: [0,1)\to [0,\infty)$ be a continuous function such that $g(0)=0$. Then for all $\delta>0$ we define the following quantity
	\[
	\mathcal{H}^g_\delta(F)=\inf\left\{\sum_{i=1}^{\infty}g(\mathrm{diam} (U_i)): \bigcup_i U_i\supset F, \mathrm{diam}(U_i)<\delta\right\}.
	\]
	The $g$-Hausdorff measure of $F$ is
	\[
	\mathcal{H}^g(F)=\lim_{\delta\to 0} \mathcal{H}^g_{\delta}(F).
	\]
	When $g(x)=x^s$ then $\mathcal{H}^g=\mathcal{H}^s$ is the $s$-Hausdorff measure and Hausdorff dimension of $F$ is
	\[
	\Haus F=\inf\{s\geq 0:\mathcal{H}^s(F)=0\}=\sup\{s\geq 0: \mathcal{H}^s(F)=\infty          \}.
	\]
	\subsubsection{Box dimensions}
	The upper box dimension of a bounded set $F$ is
	\[
	\overline{\nbox} F=\limsup_{r\to 0} \left(-\frac{\log N(F,r)}{\log r}\right).
	\]
	Similarly the lower box dimension of $F$ is
	\[
	\lbox F=\liminf_{r\to 0} \left(-\frac{\log N(F,r)}{\log r}\right).
	\]
	If the limsup and liminf are equal, we call this value the box dimension of $F$ and we denote it as $\nbox F.$
	\subsection{The fractional part symbol: an apology}
	We have mentioned at the beginning of this paper that for an irrational number $\alpha$, we use $\{\alpha\}$ to denote its reduced fractional part. Formally it is defined as follows,
	\[
	\{\alpha\}=\alpha-N_\alpha,
	\]
	where $N_\alpha$ is such that
	\[
	|N_\alpha-\alpha|=\min\{N\in\mathbb{N}: |N-\alpha|  \}.
	\]
	It is unfortunate that we use $\{.\}$ to denote a set as well. We hope that no confusion would arise with this abuse of notation. Following a tradition in Diophantine approximation, the absolute value $|\{\alpha\}|$ will be written as
	$
	\|\alpha\|
	$ 
	in this paper.
	\subsection{A Diophantine approximation result by Schmidt}
	It was shown in \cite[Theorem 2]{S80} the following theorem.
	
	\begin{thm}\label{SchmidtThm}
		Let $k\geq 2$ be an integer and $\alpha_1,\dots,\alpha_k$ be $k$ rationally independent algebraic numbers. Then for each $\delta>0$ there are only finitely many $k$-tuples of integers $q_1,\dots,q_k$ such that
		\[
		\|q_1\alpha_1+\dots+q_k\alpha_k\|(q_1q_2\dots q_n)^{1+\delta}\leq 1.
		\] 
	\end{thm}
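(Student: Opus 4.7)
My plan is to deduce the statement from Schmidt's Subspace Theorem. Introduce an auxiliary integer variable $p$ that equals the nearest integer to $q_1\alpha_1+\cdots+q_k\alpha_k$, and consider on $\mathbb{Z}^{k+1}$ the $k+1$ linear forms
\[
L_0(x_1,\ldots,x_k,y)=\alpha_1 x_1+\cdots+\alpha_k x_k-y,\qquad L_i(x_1,\ldots,x_k,y)=x_i\ \ (1\le i\le k).
\]
These are linearly independent over $\overline{\mathbb{Q}}$ with algebraic coefficients. For any $(q_1,\ldots,q_k)$ satisfying the hypothesis, $(q_1,\ldots,q_k,p)\in\mathbb{Z}^{k+1}$ satisfies
\[
\prod_{i=0}^{k}|L_i(q,p)|=\|q_1\alpha_1+\cdots+q_k\alpha_k\|\cdot|q_1\cdots q_k|\le |q_1\cdots q_k|^{-\delta}.
\]

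The next step is to translate this product bound into one of the form $\prod|L_i|\le H^{-\varepsilon}$, where $H=\max(|q_1|,\ldots,|q_k|,|p|)$ is the height of the integer point. Since $|p|\le\sum_i|q_i||\alpha_i|+1$, the height is comparable to $\max_i|q_i|$. I would split the set of solutions into finitely many subfamilies according to the rough logarithmic ratios of $|q_1|,\ldots,|q_k|$ (so within each subfamily $|q_i|\asymp H^{t_i}$ for fixed exponents $t_i$, and one may assume all $q_i\neq 0$, as vanishing coordinates reduce to the analogous problem for a smaller value of $k$ by rational independence of $1,\alpha_1,\ldots,\alpha_k$). Within a subfamily where each $|q_i|$ grows with $H$, we obtain $|q_1\cdots q_k|\ge H^{c_0}$ for some $c_0>0$, hence $\prod_i|L_i(q,p)|\le H^{-c_0\delta}$. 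Schmidt's Subspace Theorem then says that all such $(q_1,\ldots,q_k,p)$ lie in a finite union of proper rational subspaces of $\mathbb{Q}^{k+1}$.

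The step I expect to be the main obstacle is showing that each of these finitely many proper subspaces contains only finitely many genuine solutions. A proper rational subspace is cut out by a relation $c_0 y+\sum_{i=1}^{k}c_i x_i=0$ with $(c_0,\ldots,c_k)\in\mathbb{Z}^{k+1}\setminus\{0\}$. If $c_0=0$, the relation $\sum c_i q_i=0$ pins the $q_i$'s to a proper sublattice, and restricting the inequality to this sublattice yields an inequality of the same shape but involving a proper $\mathbb{Q}$-linear combination of $\alpha_1,\ldots,\alpha_k$ — again algebraic, rationally independent with $1$ up to eliminating one coordinate — so one can set up an induction on $k$, with the base case $k=1$ supplied by Roth's theorem. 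If $c_0\neq 0$, then $p$ is a rational function of the $q_i$'s; substituting back expresses $\|q_1\alpha_1+\cdots+q_k\alpha_k\|$ as the distance from a $\mathbb{Z}$-linear form in the $\alpha_i$ (with denominators absorbed by clearing $c_0$) to a point determined by the $q_i$'s, reducing to a problem in strictly fewer free parameters which is handled by the same induction.

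Overall, the outline is: set up the Subspace Theorem with the natural forms, reduce by an easy case split to situations where the product of the $|q_i|$'s is at least a power of the height, then induct on $k$ to kill the subspaces produced by Schmidt. The bookkeeping of constants across the finitely many subfamilies and across the induction is the delicate part; the conceptual engine is entirely Schmidt's theorem plus rational independence of $1,\alpha_1,\ldots,\alpha_k$.
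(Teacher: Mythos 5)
The paper does not actually prove this statement: it is Schmidt's Theorem~2 from the cited 1970 Acta paper (stated there for \emph{nonzero} integers $q_1,\dots,q_k$, a hypothesis you correctly reinstate, and with $q_k$ where the paper's display has the typo $q_n$), and the paper simply quotes it. So the only question is whether your deduction from the Subspace Theorem is sound. The skeleton --- the forms $L_0=\alpha_1x_1+\cdots+\alpha_kx_k-y$, $L_i=x_i$, finitely many rational subspaces, then induction on $k$ with Roth's theorem as base case --- is the standard modern route and can be made to work, and your $c_0=0$ branch is essentially right: eliminating one coordinate produces algebraic numbers again rationally independent together with $1$, the product $|q_1\cdots q_{k-1}|$ is no larger than $|q_1\cdots q_k|$, and the $(k-1)$-variable instance applies. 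Two repairs are needed, though. First, the height comparison needs no splitting, and as written your splitting has a hole: subfamilies in which some $|q_i|$ is of size $H^{o(1)}$ (e.g.\ $q_1$ bounded) are not covered by ``each $|q_i|$ grows with $H$.'' The fix is immediate: once all $q_i\neq0$ we have $|q_1\cdots q_k|\ge\max_i|q_i|\ge H/C$ with $C$ depending only on the $\alpha_i$ (since $|p|\le C'\max_i|q_i|$), hence $\prod_{i=0}^k|L_i(q,p)|\le|q_1\cdots q_k|^{-\delta}\le H^{-\delta/2}$ for $H$ large, which is all the Subspace Theorem requires.

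The genuine gap is in the $c_0\neq0$ branch, precisely the step you flag as the main obstacle. ``Clearing $c_0$'' does not reduce anything: from $c_0p+\sum c_iq_i=0$ one only gets $|c_0|\,\|\textstyle\sum q_i\alpha_i\|=|\sum(c_0q_i)\alpha_i+m|\ge\|\sum(c_0q_i)\alpha_i\|$ with $m\in\mathbb{Z}$, i.e.\ an inequality of the very same shape for the tuple $(c_0q_1,\dots,c_0q_k)$ --- circular, and no free parameter has disappeared, because $p$ was never free to begin with (it is the nearest integer). The standard repair is different: on such a subspace the nearest integer is forced to equal $-\sum_i(c_i/c_0)q_i$, so for all large solutions $\|\sum_i q_i\alpha_i\|=|\sum_i q_i\gamma_i|$ with $\gamma_i=\alpha_i+c_i/c_0$ algebraic and $\mathbb{Q}$-linearly independent. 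Now divide by one of them, say $\gamma_k$, and set $\beta_i=\gamma_i/\gamma_k$: then $\|\sum_{i<k}q_i\beta_i\|\le|q_k+\sum_{i<k}q_i\beta_i|\le C''\,|q_1\cdots q_{k-1}|^{-(1+\delta)}$, with $q_k$ now playing the role of the nearest integer, and $1,\beta_1,\dots,\beta_{k-1}$ again rationally independent algebraic numbers. The inductive hypothesis (with $\delta$ slightly shrunk to absorb the constants) gives finitely many $(q_1,\dots,q_{k-1})$, and each of these admits at most finitely many compatible $q_k$. With these two corrections your outline does yield the theorem from the Subspace Theorem, giving a self-contained alternative to the paper's bare citation of Schmidt, whose original 1970 argument predates the general Subspace Theorem.
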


	\section{Proofs of Theorem \ref{Th1} and \ref{Th2}}
	
	For any finite set $\Lambda$ of digits,  we can consider $\Lambda^{\mathbb{N}}$ as a metric space by defining the distance between $\omega',\omega\in\Lambda^\mathbb{N}$ to be $2^{-n(\omega,\omega')}$ where
	\[
	n(\omega,\omega')=\min\{k\in\mathbb{N}: \omega(k)\neq \omega'(k)  \}.
	\]
	In other words, two sequences are close to each other if they share a lot of initial digits in common. Let $S:\Lambda^{\mathbb{N}}\to\Lambda^{\mathbb{N}}$ be the left shift, namely, for each $\omega=\omega(1)\omega(2)\dots$ we have
	\[
	S(\omega)=\omega(2)\omega(3)\dots.
	\]
	The following result shows that to consider $E_\omega$ it is enough to consider any limit point of the shifted orbit $\{S^{i}(\omega)\}_{i\geq 0}.$ 
	\begin{lma}\label{LimitShift}
		Let $\omega\in\Lambda^\mathbb{N}$ be any sequence and let $\omega^*$ be a limit point of $\{S^i(\omega)\}_{i\geq 1}$. Then there exists a number $c(\omega^*)\in [-0.5,0.5]$ such that $c(\omega^*)+E_{\omega^*}\mod 1\subset E_\omega.$
	\end{lma}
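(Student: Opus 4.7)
The plan is to exploit the elementary cocycle identity $x_{i+n}(\omega) \equiv x_{i}(\omega)+x_n(S^i(\omega)) \pmod{1}$, which follows directly by unwinding the inductive definition: both sides are congruent to $\omega(1)+\cdots+\omega(i+n)$ modulo $1$. Morally, this says that reading the orbit of $\omega$ starting from step $i$ is the same, up to a translation by $x_i(\omega)$, as reading the orbit of $S^i(\omega)$ from the beginning.

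First I would choose a subsequence $i_k\to\infty$ along which $S^{i_k}(\omega)\to\omega^*$ in the product topology on $\Lambda^{\mathbb N}$, guaranteed by the hypothesis that $\omega^*$ is a limit point. By compactness of $[-0.5,0.5]$, pass to a further subsequence to ensure $x_{i_k}(\omega)\to c(\omega^*)$ for some $c(\omega^*)\in[-0.5,0.5]$; this produces the required translation constant. The crucial observation is that $x_n(\sigma)$ depends only on the first $n$ digits of $\sigma$, so for every fixed $n$ we have $x_n(S^{i_k}(\omega))=x_n(\omega^*)$ as soon as $k$ is large enough that the first $n$ digits of $S^{i_k}(\omega)$ coincide with those of $\omega^*$.

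Combining these two facts, for each fixed $n$ and $k$ large,
\[
x_{i_k+n}(\omega)=\{x_{i_k}(\omega)+x_n(S^{i_k}(\omega))\}=\{x_{i_k}(\omega)+x_n(\omega^*)\},
\]
and the right-hand side converges modulo $1$ to $\{c(\omega^*)+x_n(\omega^*)\}$ as $k\to\infty$. Since $E_\omega$ is closed, this shows $\{c(\omega^*)+x_n(\omega^*)\}\in E_\omega$ for every $n$. Taking closures, and using that translation by $c(\omega^*)$ is continuous on the circle $\mathbb{R}/\mathbb{Z}$, yields $c(\omega^*)+E_{\omega^*}\bmod 1\subset E_\omega$.

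The only subtlety I anticipate is the mild discontinuity of the reduced fractional part $\{\cdot\}$ at the half-integers, i.e.\ at the identified endpoints of $[-0.5,0.5]$. This is cosmetic rather than substantive: viewing the orbit on the circle $\mathbb{R}/\mathbb{Z}$, all operations involved (the shift in the initial index, translation by $c(\omega^*)$, and taking closures) become genuinely continuous, and the closedness of $E_\omega$ absorbs any boundary ambiguity arising from the choice of fundamental domain $(-0.5,0.5]$.
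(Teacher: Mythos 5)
Your proposal is correct and follows essentially the same route as the paper: extract times $i_k$ at which $S^{i_k}(\omega)$ agrees with $\omega^*$ on long initial blocks, pass to a subsequence so that $x_{i_k}(\omega)\to c$, and observe that the subsequent orbit points of $\omega$ are, modulo $1$, the orbit of $\omega^*$ translated by $x_{i_k}(\omega)$, so closedness of $E_\omega$ gives the inclusion. Your cocycle identity is just a cleaner formalization of the digit-replacement step the paper performs informally, and the endpoint/circle subtlety you flag is likewise glossed over in the paper.
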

	\begin{proof}
		For each integer $n\geq 1$, we write $\omega_1^n$ for the block of the first $n$ digits of $\omega.$ Since $\omega^*$ is a limit point of $\{S^i(\omega)\}_{i\geq 1}$, we can find sequences $i_k,j_k$ with $j_k\to\infty$ such that \[\omega_{i_k+1}^{i_k+j_k}={\omega^*}_{1}^{j_k}.\]
		The infinite sequence $\{x_{i_k+1}(\omega)\}_{k\in\mathbb{N}}$ has at least one limit point $c=c(\omega)\in [-0.5,0.5].$ By taking a subsequence if necessary we assume that \[\lim_{k\to\infty} x_{i_k+1}=c.\] We see that for each $k\in\mathbb{N}$, the points \[c_k, c_k+\omega(i_k+1), c_k+\omega(i_k+2), \dots, c_k+\omega(i_k+j_k)\] are contained in $E_{\omega}.$ Once we replace $\omega_{i_k+1},\dots,\omega_{i_k+j_k}$ with ${\omega^*}_{1},\dots,{\omega^*}_{j_k}$ and take the limit with $k\to\infty$ we see that $c+E_{\omega^*}\subset E_{\omega}$ as required.
	\end{proof}
	
	We can now finish the proof of Theorem \ref{Th1}.
	\begin{proof}[Proof of Theorem \ref{Th1}]
		By Birkhoff's recurrent theorem and the above lemma we see that $E_\omega$ contains a translated copy of $E_{\omega'}$ for a recurrent sequence $\omega'$. Thus this proof of Theorem \ref{Th1} is completed.
	\end{proof}

	\begin{thm}[$0$ is a returning position]\label{Zero}
		Let $\omega\in\Lambda^\mathbb{N}$ be a recurrent sequence then $0$ is a non-isolated limit point of $\{x_i(\omega)\}_{i\geq 1}.$ 
	\end{thm}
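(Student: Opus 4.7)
The plan is to produce, for every $0\leq i<j$, an explicit position in $\omega$ at which the orbit value equals the difference $\sigma_j-\sigma_i$ in $\mathbb{T}$, where $\sigma_i:=x_{L_i}(\omega)$ and $L_i:=|\omega_i|$, and then to use compactness of $\mathbb{T}$ to make such a difference arbitrarily small along a sequence of positions tending to infinity.

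First, I would establish the following combinatorial picture by induction on $j-i$ from the recursion $\omega_{\ell+1}=\omega_\ell a_{\ell+1}\omega_\ell$. Every occurrence of $\omega_i$ inside $\omega_j$ is indexed by a binary string $b=(b_i,\dots,b_{j-1})\in\{0,1\}^{j-i}$, where $b_\ell=0$ (respectively $1$) means the occurrence sits in the left (respectively right) copy of $\omega_\ell$ within $\omega_{\ell+1}$. Writing $\mu_\ell:=x_{L_\ell+|a_{\ell+1}|}(\omega)$, which equals $\sigma_{\ell+1}-\sigma_\ell$ in $\mathbb{T}$ because the block $\omega_\ell a_{\ell+1}$ forms the first $L_\ell+|a_{\ell+1}|$ digits of $\omega_{\ell+1}$, the occurrence corresponding to $b$ begins at position $1+\sum_{\ell=i}^{j-1}b_\ell(L_\ell+|a_{\ell+1}|)$ of $\omega_j$, and the orbit value at the position immediately preceding that start is $\sum_{\ell=i}^{j-1}b_\ell\mu_\ell$ in $\mathbb{T}$. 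Specialising to $b=(1,\dots,1)$, and using that $\omega_j$ is a prefix of $\omega$, the telescoping $\sum_{\ell=i}^{j-1}\mu_\ell=\sigma_j-\sigma_i$ yields
\[
x_{n(i,j)}(\omega)=\sigma_j-\sigma_i \text{ in } \mathbb{T}, \qquad n(i,j):=\sum_{\ell=i}^{j-1}\bigl(L_\ell+|a_{\ell+1}|\bigr),
\]
valid for every $0\leq i<j$.

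Next, since $\{\sigma_i\}_{i\geq 0}$ lies in the compact group $\mathbb{T}$, it admits a convergent subsequence $\sigma_{i_k}\to M$. Setting $n_k:=n(i_k,i_{k+1})$, the identity above gives $x_{n_k}(\omega)=\sigma_{i_{k+1}}-\sigma_{i_k}\to 0$ in $\mathbb{T}$, while $n_k\geq L_{i_k}\to\infty$. Rational independence of $1,\alpha_1,\dots,\alpha_k$ implies that $x_m(\omega)-x_n(\omega)$ is, for every $m>n\geq 0$, a non-trivial non-negative integer combination of $\alpha_1,\dots,\alpha_k$ and hence not an integer; in particular distinct positions produce distinct orbit values and $x_n(\omega)\neq 0$ for every $n\geq 1$. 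Thinning $\{n_k\}$ to be strictly increasing therefore gives an infinite sequence of distinct nonzero points of $\{x_i(\omega):i\geq 1\}$ converging to $0$, which is exactly the conclusion of the theorem.

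The main obstacle is the combinatorial identification in the first step. It is a routine induction over the nested doubling structure, but it requires careful bookkeeping of positions and accumulated digit sums through the left/right branches, in particular keeping track of the $+1$ shifts between the start of one occurrence and the end of the preceding block.
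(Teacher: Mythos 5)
Your proof is correct, but it takes a genuinely different route from the paper's. The paper argues through Lemma \ref{LimitShift}: since the shifts $S^{L_i}(\omega)$ along the prefixes $v_i=\omega_{i-1}a_i$ converge back to $\omega$, every limit point $c$ of $\{x_{L_i}(\omega)\}$ satisfies $c+E_\omega \subset E_\omega \pmod 1$, and then a dichotomy is run: an irrational $c$ forces $E_\omega=[-0.5,0.5]$, while a rational $c=p/q$ forces $E_\omega$ to be $1/q$-periodic, and in either case $0$ is a non-isolated limit point. You instead stay entirely at the symbolic level: because $\omega_i$ is a suffix of $\omega_j$ and both are prefixes of $\omega$, the orbit value at position $n(i,j)=L_j-L_i$ equals $\sigma_j-\sigma_i$ in $\mathbb{T}$, and compactness of $\mathbb{T}$ together with rational independence (which makes all orbit values pairwise distinct and nonzero, up to the small rephrasing that $x_m(\omega)-x_n(\omega)$ is only \emph{congruent} mod $1$ to the digit sum) converts a convergent subsequence of $\{\sigma_i\}$ into infinitely many distinct nonzero orbit points tending to $0$. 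Two remarks on the comparison. First, the ``main obstacle'' you flag is lighter than you think: for the only case you use, $b=(1,\dots,1)$, no induction over the doubling structure is needed, since $x_n(\omega)$ is mod $1$ the digit sum of the length-$n$ prefix of $\omega$, and the prefix of length $L_j-L_i$ is exactly $\omega_j$ with its terminal copy of $\omega_i$ removed, so $x_{L_j-L_i}(\omega)\equiv\sigma_j-\sigma_i$ is immediate. Second, the trade-off: your argument is more elementary, produces explicit positions, and settles the ``non-isolated'' issue cleanly (arguably more transparently than the paper's rational case), but the paper's route yields structural information about $E_\omega$ (it is either the whole circle or periodic under the limit translation), and precisely that dichotomy is reused in the proof of Theorem \ref{Th2}; so your proof can replace the paper's proof of Theorem \ref{Zero} itself, but not the auxiliary role that proof plays later in the paper.
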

	\begin{proof}
		Let $\omega=\lim_{i\to\infty} \omega_i$ as discussed above in Definition \ref{ReCur} with finite sequences $a_0,a_1\dots.$ Consider the sequences $v_i=\omega_{i-1}a_i$. Observe that $v_i$ is a prefix of the sequence $\omega$ for each $i\geq 1.$ Denote the length of $v_i$ as $L_i$. Since $\omega_i\to\omega$, we see that for each limit point $c$ of the sequence of numbers $\{x_{L_i}(\omega)\}_{i\geq 1}$, by Lemma \ref{LimitShift}, $c+E_{\omega}\mod 1\subset E_{\omega}.$ Thus if $c$ is an irrational number then $E_{\omega}$ contains an entire orbit of an irrational rotation and therefore $E_\omega$ must be the whole unit interval. Otherwise suppose that $c=p/q$ is a rational number. Without loss of generality we shall assume that $(p,q)=1.$ Then we see that
		\[
		E_\omega\supset \{c+E_{\omega}\}\supset \{2c+E_\omega\}\supset\dots\supset \{qc+E_\omega\}=E_\omega.
		\]
		Therefore $E_\omega$ is a periodic set with period $1/q$ and each number of form $\{p/q\}$ with $0\leq p\leq q$ belongs to the set of limit points of $\{x_{L_i}(\omega)\}_{i\geq 1}$. In particular, $0$ is a limit point which is not isolated.
	\end{proof}
	For a general sequence $\omega$ it is possible that $0$ is not a limit point of $\{x_i(\omega)\}_{i\geq 1}.$ To give such an example, let $\alpha=\pi/3,\beta=e/4.$ Let $\epsilon\in (0,0.5|\alpha-\beta|)$ be an arbitrarily chosen real number. Let $x_0=0$ and for each $i\geq 1$, whenever $\{x_{i-1}+\alpha\}\notin (-\epsilon,\epsilon)$ we set $x_i=\{x_{i-1}+\alpha\}$, or else we set $x_i=\{x_{i-1}+\beta\}.$ We record the sequence of 
	rotations by $\alpha$ or $\beta$ for each step. In such a way we have defined a $\alpha,\beta$ sequence $\omega$ with the property that $0$ is not limit point of $\{x_i(\omega)\}_{i\geq 1}.$ Therefore the recurrence property  plays a crucial role.
	
	Now we want to deal with finitely recurrent sequences. A trivial example of a finitely recurrent sequence is the constant sequence. In our setting, constant sequence corresponds to $\{i\alpha\}_{i\geq 1}$ with an irrational number $\alpha.$ In this way, our next result can be seen as a natural generalization of the topological minimality of irrational rotations.
	
	\begin{proof}[Proof of Theorem \ref{Th2}]
		Here we adopt all notations used in the proof of Theorem \ref{Zero}. We assume that $\{x_i(\omega)\}_{i\geq 1}$ is not dense in $[-0.5,0.5].$ According to the argument in the proof of Theorem \ref{Zero}, we see that it is of no loss of generality if we assume that $\lim_{i\to\infty} x_{L_i}(\omega)=0.$ In fact, we consider all limit points of $\{x_{L_i}(\omega)\}_{i\geq 1}.$ If there is an irrational number among those limit points then $E_\omega$ must be the whole interval. If a non-zero rational number $\{p/q\}$ is in the set of limit points, then $E_\omega$ is periodic of period $1/q.$ In this case we may replace $\alpha$ by $q\alpha$ and $\beta$ by $q\beta$ and obtain a new finitely recurrent sequence $\omega'$. Then we see that $E_\omega'$ is a scaled version of $E_\omega\cap [-0.5/q,0.5/q].$ We can repeat this argument by looking at the limit points of $\{x_{qL_i}(\omega')\}_{i\geq 1}.$ If it still contains non-zero rational numbers say $\{p'/q'\}$ then $E_\omega'$ is periodic with period $1/q'$. This implies that $E_\omega$ has period $1/qq'.$ If this argument can be repeated infinitely many times $E_{\omega}$ must have an arbitrary small period and therefore it is the whole interval. The only chance to stop this repeated argument is that for some integer $q$, zero is the only limit point of $\{x_{qL_i}(\omega')\}_{i\geq 1}.$ We can assume that this happens from the beginning.
		
		For each $i\geq 1$ we have
		\[
		v_{i}=\omega_{i-1}a_{i}
		\]
		and
		\[
		v_{i+1}=\omega_{i}a_{i+1}=\omega_{i-1}a_i\omega_{i-1}a_{i+1}.
		\]
		From here we see that
		\[
		x_{L_{i+1}}(\omega)=\{2x_{L_{i}}(\omega)+\Delta(a_i,a_{i+1})\},
		\]
		where we write $l_i$ as the length of $a_i$ and 
		\[
		\Delta(a_i,a_{i+1})=x_{l_{i+1}}(a_{i+1})-x_{l_{i}}(a_{i}).
		\]
		As $x_{L_i}(\omega)\to 0$ we see that $\{\Delta(a_i,a_{i+1})\}\to 0$ and this is only possible if there exists an integer $N$ such that $a_M=a_{M+1}=a_{M+2}\dots.$ In this case $\omega$ is a periodic sequence as it is of the following form
		\[
		\omega=(a a_M a)a_M(aa_Ma)\dots. 
		\]
		Since $\alpha_1,\dots,\alpha_k$ are rationally independent, we see that $E_\omega$ contains an orbit of an irrational rotation and therefore $E_\omega$ must be $[-0.5,0.5].$ This contradicts to our assumption and the result follows.
	\end{proof}
	\begin{cor}
		There exists a sequence over $\{0,1\}$ such that its orbit closure under the left shift does not contain any finitely recurrent sequence.
	\end{cor}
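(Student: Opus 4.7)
The plan is to derive the corollary by contrapositive from Theorem \ref{Th2}, using the known existence of multi-rotation orbits that do not fill $[-0.5,0.5]$. First I would invoke the result from \cite{E61,K79} already cited in the introduction, which supplies rationally independent irrationals $\alpha_1,\alpha_2$ and a sequence $\omega\in\{\alpha_1,\alpha_2\}^{\mathbb{N}}$ such that $E_\omega$ is nowhere dense (indeed of arbitrarily small Hausdorff dimension), and in particular a proper subset of $[-0.5,0.5]$.

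Next, suppose toward a contradiction that the shift-orbit closure $\overline{\{S^i(\omega)\}_{i\geq 0}}$ contains a finitely recurrent sequence $\omega^*$. Any recurrent sequence is a limit point of its own shift orbit, because the construction $\omega^*=\lim \omega^*_i$ with $\omega^*_i=\omega^*_{i-1}a_i\omega^*_{i-1}$ forces every prefix of $\omega^*$ to reappear infinitely often inside $\omega^*$; consequently $\omega^*$ is a limit point of $\{S^i(\omega)\}_{i\geq 1}$ regardless of whether it equals some $S^j(\omega)$. Lemma \ref{LimitShift} then supplies $c\in[-0.5,0.5]$ with $c+E_{\omega^*}\mod 1\subset E_\omega$. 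But Theorem \ref{Th2} applied to the finitely recurrent $\omega^*$ yields $E_{\omega^*}=[-0.5,0.5]$, so $E_\omega=[-0.5,0.5]$, contradicting the choice of $\omega$.

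To finish, I would observe that the notion of being finitely recurrent is purely combinatorial in the symbolic sequence, so relabeling the two-letter alphabet $\{\alpha_1,\alpha_2\}$ by $\{0,1\}$ transports the example to $\{0,1\}^{\mathbb{N}}$ and preserves both the shift-orbit closure and the absence of finitely recurrent points therein. There is no real obstacle in the argument: all the substance has been loaded into Theorem \ref{Th2} and into Kahane's construction of nowhere-dense $E_\omega$, and the corollary is essentially an immediate combinatorial packaging of the two.
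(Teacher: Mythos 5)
Your argument is correct, and its logical core --- if the shift-orbit closure of $\omega$ contained a finitely recurrent sequence $\omega^*$, then Lemma \ref{LimitShift} combined with Theorem \ref{Th2} would force $E_\omega\supset c+E_{\omega^*} \bmod 1=[-0.5,0.5]$ --- is exactly the completion that the paper's one-line proof leaves implicit; your remark that a recurrent $\omega^*$ is automatically a limit point of $\{S^i(\omega)\}_{i\geq 1}$ (so the lemma applies even when $\omega^*=S^j(\omega)$) is a genuine gap-filling detail the paper glosses over. Where you diverge is in the source of a sequence with $E_\omega\neq[-0.5,0.5]$: you import Katznelson's construction from \cite{K79}, whereas the paper's ``recall the discussion below the proof of Theorem \ref{Zero}'' refers to its own elementary example, the greedily defined $\omega$ whose orbit $\{x_i(\omega)\}_{i\geq 1}$ never enters $(-\epsilon,\epsilon)$, so that $E_\omega$ already misses $(-\epsilon,\epsilon)\setminus\{0\}$. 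The paper's route is self-contained and only needs properness of $E_\omega$, for which nowhere density (let alone small Hausdorff dimension) is overkill; your route leans on a substantially deeper external theorem but otherwise runs identically. One point to pin down in your version: Theorem \ref{Th2} requires $1,\alpha_1,\alpha_2$ to be linearly independent over $\mathbb{Q}$, so you should note that the parameters in the \cite{K79} example can indeed be chosen with this property; the same care actually applies to the paper's example, since rational independence of $1,\pi/3,e/4$ is not known, and the greedy construction is better run with a pair such as $\sqrt{2},\sqrt{3}$, for which it works verbatim.
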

	\begin{proof}
		Recall the discussion below the proof of Theorem \ref{Zero} and the result follows.
	\end{proof}
	
	We remark here that although it is easy to construct a recurrent sequence which is not finitely recurrent, it is tricky to explicitly construct a sequence without any finitely recurrent sequence in its orbit closure under the left shift.
	\section{proof of Theorem \ref{Th3}}\label{dio}
	For $k$ rationally independent numbers $\alpha_1,\dots, \alpha_k$, we want to study the set $E_\omega$ for any $\omega\in\Lambda^{\mathbb{N}}.$ Recall that $\Lambda=\{\alpha_1,\dots,\alpha_k\}$ is the digit set we are interested in. For any integer $t\geq 1,$ we decompose the interval $[-0.5,0.5]$ into $t$ almost disjoint closed intervals of length $t^{-1}.$ Here we say that two closed intervals are almost disjoint if their intersection contains at most one point. Denote the collection of those intervals as $\mathcal{I}_t.$ We shall construct a directed graph $G_t$ with vertex set $\mathcal{I}_t.$ For each $i\in\{1,\dots,k\},$ we draw a directed edge from $a\in\mathcal{I}_{t}$ to $b\in\mathcal{I}_t$ if and only if $(\alpha_i+a)\cap b\neq\emptyset.$ If $t$ is large enough we see that there is no edges that loops around any vertex. We complete the construction by drawing all edge with all $i\in\{1,\dots,k\}.$ When $t$ is large enough we see that there exists no pairs $a,b\in \mathcal{I}_t$ such that there are more than $1$ directed edges from $a$ to $b$. A simple observation yields that the in/out degrees of each vertex are at most $3k.$
	
	Given the set $E_\omega,$ for large enough integers  $t$ we can collect the intervals in $\mathcal{I}_t$ that intersects with $E_\omega.$ We call this collection $\mathcal{I}_t(\omega).$ Starting with any interval $a\in\mathcal{I}_t(\omega),$ since $E_\omega$ is the closure of a multi-rotation orbit we see that there is another interval $b\in\mathcal{I}_t(\omega)$ such that $a,b$ are connected (from $a$ to $b$) in the directed graph $G_t.$ Then there is yet another $c\in\mathcal{I}_t(\omega)$ such that  $b,c$ are connected with a directed edge. Continuing with this manner, we can find a sequence over $\mathcal{I}_t(\omega)$ such that each consecutive pairs in this sequence are connencted with a directed edge. We write this sequence as $g=g_1g_2g_3\dots.$ We are interested in \emph{primitive cycles} which are words of finite length in $g$ such that the first and last digit are the same and all the digits in between are different from each other. For example, a primitive cycle with digit $1,2,3,4,5$ is $12341.$ 
	
	To see the existence of primitive cycles in any sequence over finite digits, consider for now that the digit set is $\{1,\dots,n\}$ with $n\geq 2$ being an integer. Let $\gamma=\gamma_1\gamma_2\dots$ be an infinite sequence over $\{1,\dots,n\}.$ Then there exist two integers $k_1<k_2$ such that $\gamma_{k_1}=\gamma_{k_2}.$ Now we consider $\gamma_{k_1}\dots\gamma_{k_2}.$ This might not be a primitive cycle as there could exist two integers $k'_1,k'_2$ such that $k'_1-k_1, k'_2-k_2$ are not both $0$ and such that
	$
	\gamma_{k'_1}=\gamma_{k'_2}.
	$
	Then we may consider the word $\gamma_{k'_1}\dots\gamma_{k'_2}$, this word has length strictly smaller than that of $\gamma_{k_1}\dots\gamma_{k_2}.$ Continuing this argument we see that eventually we will end up with a primitive cycle.
	
	In the situation of $g,$ we see that $g$ has a primitive cycle of length $s$ least $2.$ We may assume that $g_1\dots g_s$ is a primitive cycle. By the construction of multi-rotation we see that there are $k$ non-negative integers $n_1,\dots,n_k$ such that $\sum_{i=1}^k n_i=s$ and such that
	\[
	\left\|\sum_{i=1}^k n_i\alpha_i\right\|\leq 2t^{-1}.\tag{*}
	\]
	The above inequality is well-studied as a subject called \emph{Diophantine approximation}. Define a function $\Phi_\Lambda$ by setting \[\Phi_{\Lambda}(s)=\min\left\{\left\|\sum_{i=1}^k n_i\alpha_i\right\|:(n_1,n_2,\dots,n_k)\in\mathbb{Z}_{\leq s}^k\setminus (0,0,\dots,0) \right\}\tag{**}\]
	for each integer $s.$ Here $\mathbb{Z}^k_{\leq s}$ is the set of $k$ tuples of integers $n_1,\dots,n_k$ such that $|n_1|,\dots,|n_k|\leq s.$ We see that $\Phi_\Lambda(s)$ is less or equal to the LHS of $(*).$ 
	
	In general we know that $\Phi_\Lambda(s)\leq 2s^{-k}$ for all $s\geq 1$ by Dirichlet's pigeonhole argument. On the other hand by Theorem \ref{SchmidtThm}, we know that when $\alpha_1,\dots,\alpha_k$ are rationally independent algebraic numbers, for any $\delta>0$ there is a number $C_\delta>0$ and we have for all $s\geq 1,$
	\[
	\Phi_\Lambda(s)\geq C_\delta s^{-k-\delta}.
	\]
	Thus we have shown that if $g$ contains a primitive cycle with length $s$ and integers $n_1,\dots,n_k$ such that $n_1+\dots+n_k=s$ as discussed in above then
	\[
	\Phi_{\Lambda}(s)\leq \left\|\sum_{i=1}^k n_i\alpha_i\right\|\leq 2t^{-1}.
	\]
	If $\alpha_1,\dots,\alpha_k$ are rationally independent algebraic numbers then we see that,
	\[
	C_\delta s^{-k-\delta}\leq \Phi_{\Lambda}(s)\leq \left\{\sum_{i=1}^k n_i\alpha_i\right\}\leq 2t^{-1}.
	\]
	This implies that
	\[
	s\geq \left(\frac{C_\delta}{2} t\right)^{1/(k+\delta)}.
	\]
	As we clearly have $\#\mathcal{I}_t(\omega)\geq s$, therefore we have the following inequality,
	\[
	\#\mathcal{I}_t(\omega)\geq \left(\frac{C_\delta}{2} t\right)^{1/(k+\delta)}.
	\]
	Since the above holds for all $t\geq 1$ and $\#\mathcal{I}_t$ is at most double the minimal amount of $t^{-1}$-intervals required to cover $E_\omega$ we see that
	\[
	\lbox E_\omega\geq \frac{1}{k+\delta}.
	\]
	As $\delta$ can be chosen arbitrarily we see that
	\[
	\lbox E_\omega\geq \frac{1}{k}.
	\]
	This proves the result.
	\section{Further discussions}\label{FUR}
	\subsection{ Complexities of sequences and multi-rotation orbits}
	In terms of symbolic dynamics there are several notions of recurrence. We have met two of them, namely recurrence and finitely recurrence. In fact, by Birkhoff's recurrent theorem, one can replace `recurrent sequence' with `uniformly recurrent sequence' in Theorem \ref{Th1}(2). Here, a uniformly recurrent sequence $\omega$ over a digit set $\Lambda$ is a recurrent sequence with the property that for each integer $n\geq 1,$ there is an integer $l_n$ such that the word $\omega(1)\dots\omega(n)$ occurs in $\omega$ with gap at most $l_n.$ That is to say, if \[\omega(i)\dots\omega(i+n-1)=\omega(1)\dots\omega(n)\] for some $i\geq 1$ then there is an integer $i+n-1\leq j\leq i+n-1+l_n$ such that
	\[
	\omega(j)\dots\omega(j+n-1)=\omega(1)\dots\omega(n).
	\]
	In other words, for each integer $n$, there is a number $N(n)$ such that $\omega(1)\dots\omega(n)$ appears in every $N(n)$ block of $\omega.$ In general, $N(n)$ can be much larger than $n.$
	We see that finitely recurrence is a stronger notion than uniformly recurrence in the sense that the numbers $l_n$ can be bounded from above by  $Ln$ with a constant $L$. If we consider being finitely recurrent is a notion of having low complexity then Theorem \ref{Th2} essentially says that the assertion of Conjecture \ref{feng} holds for multi-rotation orbits along sequences of low complexity. In symbolic dynamics there are different notions of complexity. A commonly used one is the cardinalities of the collection of words of different lengths. More precisely, for each integer $n\geq 1$, let $p_n(\omega)$ be the cardinality of the following set of $n$-blocks of $\omega,$
	\[
	\{\omega(i)\dots\omega(i+n-1): i\geq 1\}.
	\]
	We are interested in the growth rate of $p_n(\omega)$ for $n\to\infty.$ On one hand, we know that if $\omega$ is recurrent and $p_n(\omega)\leq n$ for some integer $n$ then $\omega$ is eventually periodic. That is to say, there is an integer $N$ such that the shifted sequence $S^N(\omega)$ is periodic. In this case we know that the multi-rotations orbit $E_\omega$ contains a translated copy of an irrational rotation and therefore $E_\omega=[-0.5,0.5].$ In fact, we can extend this result slightly. We recall the following terminologies.
	\begin{defn}
		A recurrent sequence $\omega$ over a digit set $\Lambda$ is said to be Sturmian if there is an integer $n\geq 1$ such that $p_n(\omega)\leq n+\#\Lambda-1.$
	\end{defn}
	
	\begin{defn}
		A recurrent sequence $\omega$ over a digit set $\Lambda$ is said to be balanced if there is an integer $N\geq 1$ such that for any digit $\lambda\in\Lambda$ and all integer $n\geq 1$ the number of $\lambda$ in any two words $\omega',\omega''$ of $\omega$ with length $n$ differs at most $N.$
	\end{defn}
	
	We have the following result, see \cite[Theorem 108]{B17}.
	\begin{thm}
		Let $\Lambda=\{0,1\}.$ Then a sequence $\omega\in\Lambda^{\mathbb{N}}$ is Sturmian if and only if it is balanced.
	\end{thm}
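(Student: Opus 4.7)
The plan is to handle the two implications separately, after a preliminary reduction to the aperiodic case. For eventually periodic recurrent sequences the claim is immediate: the period controls both the balance constant $N$ and the complexity $p_n(\omega)$, so the equivalence is trivially satisfied. For the aperiodic case I will invoke the Morse--Hedlund theorem, which guarantees that any recurrent aperiodic sequence on $\{0,1\}$ satisfies $p_n(\omega) \geq n+1$ for every $n \geq 1$. Combined with the Sturmian hypothesis $p_n(\omega) \leq n+1$ for some $n$, together with the monotonicity of the complexity function, this should upgrade to $p_n(\omega) = n+1$ for every $n \geq 1$, which is the classical definition of a Sturmian word.

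For the direction Sturmian $\Rightarrow$ balanced, the main tool is the uniqueness of the right-special factor at each length. Since $p_{n+1}(\omega) - p_n(\omega)$ equals the number of right-special factors of length $n$, the equality $p_n(\omega) = n+1$ for all $n$ forces exactly one right-special factor $R_n$ of each length. I would then prove by induction on $n$ that any two factors $u, v$ of length $n$ satisfy $\bigl| |u|_0 - |v|_0 \bigr| \leq 1$. The induction step exploits the fact that every factor of length $n+1$ is obtained either by extending $R_n$ by both letters or by extending another factor of length $n$ in a unique way, which rigidly controls the Parikh vectors of length $n+1$ in terms of those at length $n$.

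For the converse direction, set $P_n = \{(|u|_0, |u|_1) : u \text{ is a factor of } \omega \text{ of length } n\}$. Balance with constant $N$ forces the first coordinate to lie in an interval of length at most $N$, so $|P_n| \leq N+1$. The cleanest finish invokes the classical structure theorem identifying every balanced aperiodic binary sequence with a coding of an irrational rotation of the circle; such codings have exactly $n+1$ factors of length $n$, witnessed by the partition of the circle into $n+1$ arcs produced by the first $n$ iterates of the rotation. Alternatively, one can proceed by a direct combinatorial induction showing that factors with a fixed Parikh vector are linearly ordered in a natural way, which gives a linear-in-$n$ bound on $p_n(\omega)$.

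The main obstacle is the balanced $\Rightarrow$ Sturmian direction: converting a global balance constant $N$ into a sharp linear complexity bound is delicate, and the rotation-coding structure theorem, while elegant, itself rests on a non-trivial argument involving the continued-fraction expansion of the rotation angle and so would have to be invoked or reproved. Both directions are classical and are presented in detail in textbooks on combinatorics on words, in particular in \cite{B17}.
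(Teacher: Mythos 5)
The first thing to note is that the paper contains no proof of this statement at all: it is imported as a known result with the citation \cite[Theorem 108]{B17}, so there is no internal argument to compare yours against. Your outline (reduce to the aperiodic case, Morse--Hedlund to upgrade $p_n(\omega)\leq n+1$ to $p_n(\omega)=n+1$ for all $n$, uniqueness of the right-special factor for Sturmian $\Rightarrow$ balanced, and the rotation-coding structure theorem for the converse) is the standard textbook route, i.e.\ essentially a fleshed-out version of the very citation the paper makes; and since you yourself defer the hard converse to the literature, in substance you and the paper end up in the same place.

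There is, however, a genuine mismatch between what your sketch proves and the statement as this paper defines its terms. The paper's ``balanced'' allows an arbitrary constant $N$, not $N=1$, and your converse only extracts that the Parikh vectors of length-$n$ factors fall into at most $N+1$ classes before appealing to the structure theorem identifying balanced aperiodic binary sequences with rotation codings --- but that theorem is about $1$-balanced sequences. For $N\geq 2$ the sharp conclusion is simply false: the Thue--Morse sequence (which the paper itself displays) is uniformly recurrent and $2$-balanced, yet $p_n(\omega)>n+1$ for every $n\geq 2$, so no argument can pass from $N$-balance to complexity $n+1$; your fallback ``factors with a fixed Parikh vector are linearly ordered'' would at best yield $p_n(\omega)=O(n)$. (Relatedly, the paper's Sturmian condition asks for $p_n(\omega)\leq n+\#\Lambda-1$ for \emph{some} $n$, which is vacuous at $n=1$, so both definitions must be read in their classical form --- $1$-balanced, and $p_n(\omega)\leq n+1$ for all $n$ --- for the statement to be the theorem proved in \cite{B17}.) So your proposal proves the classical equivalence, not the literal statement here; to match the paper you should either fix $N=1$ (and state the aperiodicity caveat) or do what the paper does and simply cite \cite[Theorem 108]{B17}.
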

	
	From the proof of \cite[Theorem 1.5(i)]{FX18}, we see that $E_\omega$ contains intervals if $k=2$ and $\omega\in\{\alpha,\beta\}^{\mathbb{N}}$ with rational independent numbers $\alpha,\beta$ is a balanced sequence. Thus, we see that the assertion of Conjecture \ref{feng} holds when $k=2$ and $\omega$ is Sturmian.
	
	Observe that when $\omega$ is finitely recurrent then $p_n(\omega)=O(n).$ On the other hand, in general we know that $p_n(\omega)\leq \#\Lambda^n.$ So there is still a big gap between finitely recurrent sequences and general sequences in terms of their complexity. From the discussions above, there seems to be a curious connection between $E_\omega$ and complexity of $\omega.$ We pose in below the following Conjecture.
	
	\begin{conj}
		In the hypothesis of Conjecture \ref{feng} with $k=2.$ Let $\omega\in\Lambda^{\mathbb{N}}$ be a uniformly recurrent sequence. Consider numbers $p_n(\omega)$ for integers $n\geq 1.$ If $p_n(\omega)=O(n)$ then $E_\omega$ contains intervals.
	\end{conj}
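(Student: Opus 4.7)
The plan is to exploit the structure theory of uniformly recurrent subshifts of linear complexity. Since $\omega$ is uniformly recurrent, its orbit closure $X_\omega$ under the left shift is a minimal subshift, and together with $p_n(\omega)=O(n)$ this places us in the well-studied regime of minimal subshifts of at most linear complexity. A theorem of Boshernitzan then guarantees that $X_\omega$ admits only finitely many ergodic invariant probability measures, so each letter $\lambda\in\Lambda$ has a well-defined frequency $f_\lambda\geq 0$ with $f_\alpha+f_\beta=1$; the rational independence of $\alpha,\beta$ rules out $f_\alpha f_\beta=0$. Set $\gamma=f_\alpha\alpha+f_\beta\beta$. Writing $n_\alpha(\omega,n)$ and $n_\beta(\omega,n)$ for the counts of $\alpha$ and $\beta$ among the first $n$ digits of $\omega$, we have
\[
x_n(\omega)=\{n\gamma+D_n\},\qquad D_n=(n_\alpha(\omega,n)-nf_\alpha)\alpha+(n_\beta(\omega,n)-nf_\beta)\beta,
\]
so $E_\omega$ contains every limit point of $\{n\gamma+D_n\}_{n\geq 1}$.

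The argument then splits into two cases. In the \emph{balanced} subcase, $|D_n|$ stays bounded and so $D_n$ accumulates at finitely many values $v_1,\dots,v_m$. If $\gamma$ is irrational, then for each $v_j$ the orbit $\{n\gamma+v_j\bmod 1\}$ is dense in $[-0.5,0.5]$, and since $E_\omega$ is closed one obtains $E_\omega=[-0.5,0.5]$. If $\gamma$ happens to be rational, a periodization argument analogous to the one in the proof of Theorem \ref{Th2} applies. This subcase already recovers the Sturmian result discussed just before the conjecture.

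For the unbalanced subcase, the plan is to use quantitative recurrence. Uniform recurrence together with $p_n=O(n)$ should allow one to show (possibly after mild additional input) that every factor of length $n$ in $\omega$ reappears within a window of length $O(n)$, i.e.\ that $\omega$ is linearly recurrent. Each pair of occurrences $i<j$ of a common prefix of length $n$ produces counts $(n_\alpha(j)-n_\alpha(i),n_\beta(j)-n_\beta(i))=(a,b)$ with $a+b=O(n)$ and $\|a\alpha+b\beta\|$ small, matching the primitive-cycle mechanism of Section \ref{dio} but with an improved quantitative rate coming from recurrence rather than from the algebraic Diophantine bound of Theorem \ref{SchmidtThm}. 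Combining many such near-returns, one should show that the translates $\{n\gamma+D_n\}$ actually fill a subinterval of $[-0.5,0.5]$, yielding interval containment in $E_\omega$.

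The main obstacle is precisely this upgrade from density-type conclusions to interval containment in the unbalanced subcase. Passing from uniform recurrence plus linear complexity to genuine linear recurrence is not automatic (it holds for primitive substitutive sequences by a theorem of Durand, but fails in general), and even when it holds, controlling $D_n$ tightly enough to force $E_\omega$ to absorb a whole interval rather than only a dense set requires new input beyond what the proof of Theorem \ref{Th3} supplies. A natural test case is $\omega$ arising from a symmetric three-interval exchange, where $p_n\leq 2n+1$ but $|D_n|$ is known to grow like $\log n$; settling the conjecture already for such codings would likely indicate the right route.
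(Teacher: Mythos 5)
This statement is posed in the paper as a conjecture --- the paper offers no proof of it, only the supporting evidence that it holds in the Sturmian/balanced case (via the proof of \cite[Theorem 1.5(i)]{FX18}) and in the finitely recurrent case (Theorem \ref{Th2}). So the only question is whether your proposal closes the conjecture, and it does not; indeed you say as much yourself in the final paragraph. Beyond the admitted gap, two of the steps you treat as settled are themselves problematic. First, finiteness of the set of ergodic measures (Boshernitzan) does not give you well-defined letter frequencies along $\omega$: unless the subshift is uniquely ergodic, the Birkhoff averages $n_\alpha(\omega,n)/n$ need not converge, so the drift $\gamma=f_\alpha\alpha+f_\beta\beta$ and the decomposition $x_n(\omega)=\{n\gamma+D_n\}$ are not available in general (unique ergodicity is guaranteed only under stronger complexity bounds such as $\limsup p_n/n<3$, not under $p_n=O(n)$). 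Second, in the balanced subcase your argument is not sound as written: boundedness of $D_n$ does not force finitely many accumulation values (when $f_\alpha$ is irrational the discrepancies $n_\alpha(\omega,n)-nf_\alpha$ take infinitely many values in a bounded set), and even along a subsequence with $D_{n_k}\to v$ you only control $\{n_k\gamma+v\}$ for that subsequence, which need not be dense; density of the full orbit $\{n\gamma+v\}_{n\geq 1}$ does not follow. The correct route for the balanced case is the Feng--Xiong difference-set argument the paper cites, which genuinely produces intervals.

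The decisive gap is the unbalanced case, which is exactly the content of the conjecture. Uniform recurrence together with $p_n(\omega)=O(n)$ does not imply linear recurrence (you note this), and even granting linear recurrence, the near-return mechanism you invoke is the same one used in Section \ref{dio}: it produces integer pairs $(a,b)$ with $\|a\alpha+b\beta\|$ small, which yields only a lower bound on the number of $t^{-1}$-intervals meeting $E_\omega$, i.e.\ a box-dimension estimate as in Theorem \ref{Th3}. No accumulation of such estimates can upgrade ``many well-separated points'' to ``contains an interval''; interval containment requires a mechanism of a different nature (such as the Steinhaus-type argument of \cite{FX18}, which at present needs balancedness). So the proposal is a reasonable research program with a plausible first reduction, but it leaves the essential step unproved, and the statement remains open.
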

	The assertion of the above conjecture is true if $p_n(\omega)\leq n+1$(Sturmian) or $\omega$ is finitely recurrent, which are both natural examples for sequences with $p_n(\omega)=O(n)$. In other words, if $E_\omega$ does not contain intervals then $\omega$ must be complicated in some sense.
	
	\subsection{Diophantine approximation with sign restrictions}
	In Section \ref{dio} we have related the box dimension of multi-rotation orbit to a Diophantine approximation problem. What we are interested in is the following problem. Given a set of $k\geq 1$ real numbers $\Lambda=\{\alpha_1,\dots,\alpha_k\}$ we defined the following function ($(**)$ in Section \ref{dio}),
	\[\Phi_{\Lambda}(s)=\min\left\{\left\|\sum_{i=1}^k n_i\alpha_i\right\|:(n_1,n_2,\dots,n_k)\in\mathbb{Z}_{\leq s}^k\setminus (0,0,\dots,0) \right\}.\]
	By the argument in Section \ref{dio}, if $\Phi_\Lambda(s)\geq s^{-\tau}$ for infinitely many integers $s$ and for a number $\tau>0,$ then all multi-rotation orbits with rotation parameters $\{\alpha_1,\dots,\alpha_k\}$ have upper box dimension at least $1/\tau.$ Notice that in Section \ref{dio}, for $\{\alpha_1,\dots,\alpha_k\}$ being algebraic numbers, we have $\Phi_{\Lambda}(s)\geq C_\delta s^{-k-\delta}$ for all $s\geq 1$ with a suitable constant $C_\delta>0.$ This allows us to deduce the lower bound of the lower box dimension of all multi-rotation orbits.
	
	On one hand, we know that if $\alpha_1,\dots,\alpha_k$ are rationally independent then $\Phi_{\Lambda}(s)\leq 2s^{-k}$ for all $s\geq 1.$ On the other hand, we want to know what happens if $\Phi_{\Lambda}(s)\leq s^{-k-\delta}$ for all $s\geq 1.$ In the case when $k=1$, it is simple to see that if $\Phi_\alpha(s)\leq s^{-1-\delta}$ for all $s\geq 1$, then $\alpha$ must be a rational number. For $k\geq 2,$ little is known. See \cite[Section 3]{W09} for more details. In particular, by \cite[Theorem 62]{W09}, we know that for almost all $(\alpha_1,\dots,\alpha_k)\in\mathbb{R}^k,$ for all $\delta>0,$ we have
	\[
	\Phi_{\Lambda}(s)\geq s^{-k-\delta}
	\]
	for infinitely many $s\geq 1.$ This implies the following result.
	\begin{thm}\label{LebAlmost}
		Under the hypothesis of Conjecture \ref{feng}, there is a full Lebesgue measure set $A\subset\mathbb{R}^k$ such that whenever $(\alpha_1,\dots,\alpha_k)\in \mathbb{R}^k$ we have for all $\omega\in \Lambda^{\mathbb{N}}$ that
		\[
		\ubox E_\omega\geq \frac{1}{k}.
		\]
	\end{thm}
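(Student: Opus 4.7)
The plan is to combine two inputs that are already assembled in the discussion preceding the theorem. The Diophantine side is \cite[Theorem 62]{W09}, which gives a full Lebesgue measure set $A\subset\mathbb{R}^k$ such that for every $(\alpha_1,\dots,\alpha_k)\in A$ and every $\delta>0$ there are infinitely many positive integers $s$ with
\[
\Phi_{\Lambda}(s)\geq s^{-k-\delta}.
\]
This is to be used as a black box. The dynamical side is the counting scheme from Section \ref{dio}: for each scale $t$ large enough, the walk through $\mathcal{I}_t(\omega)$ in the graph $G_t$ contains a primitive cycle of some length $s\leq \#\mathcal{I}_t(\omega)$, which supplies nonnegative integers $n_1,\dots,n_k$ summing to $s$ and satisfying $\|\sum_i n_i\alpha_i\|\leq 2t^{-1}$. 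In particular, for each $t$ there exists some $s\leq \#\mathcal{I}_t(\omega)$ with $\Phi_\Lambda(s)\leq 2t^{-1}$.

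Since $\Phi_\Lambda$ is monotone non-increasing in $s$, this can be rephrased as a lower bound on covering numbers: if we set $s^*(t)=\min\{s\geq 1:\Phi_\Lambda(s)\leq 2t^{-1}\}$, then $\#\mathcal{I}_t(\omega)\geq s^*(t)$ for all sufficiently large $t$. The plan is now to feed the Diophantine lower bound into this inequality along a carefully chosen subsequence of scales. Fix $(\alpha_1,\dots,\alpha_k)\in A$, fix $\delta>0$, and let $s_0$ run through the infinite set supplied by \cite[Theorem 62]{W09}. For each such $s_0$, choose $t_0=t_0(s_0)$ with $2t_0^{-1}<s_0^{-k-\delta}$, for example $t_0=\lfloor 3\,s_0^{k+\delta}\rfloor$. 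Then $\Phi_\Lambda(s_0)\geq s_0^{-k-\delta}>2t_0^{-1}$, so by the definition of $s^*$ we must have $s^*(t_0)>s_0$, and hence
\[
\#\mathcal{I}_{t_0}(\omega)\geq s_0\geq (t_0/3)^{1/(k+\delta)}.
\]

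Taking the $\limsup$ along the unbounded sequence $t_0\to\infty$ and passing to the covering count $N(E_\omega,t_0^{-1})$ (which differs from $\#\mathcal{I}_{t_0}(\omega)$ by at most a factor of two) gives
\[
\ubox E_\omega\geq \frac{1}{k+\delta}.
\]
Since $\delta>0$ is arbitrary and the set $A$ does not depend on $\omega$ or on $\delta$, this yields the desired bound $\ubox E_\omega\geq 1/k$ uniformly over all $\omega\in\Lambda^{\mathbb{N}}$.

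There is no real obstacle, since both ingredients are essentially off-the-shelf; the only subtlety to flag is the structural difference with Theorem \ref{Th3}. The Schmidt-type bound used there holds at \emph{every} scale, so it produced a uniform lower bound on $N(E_\omega,t^{-1})$ for all large $t$ and hence controlled $\lbox$. Here the Diophantine hypothesis is only ``infinitely often'', which is precisely why the conclusion is phrased in terms of $\ubox$ rather than $\lbox$; the proof above is designed to exploit exactly this weaker input.
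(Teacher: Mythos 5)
Your proof is correct and follows essentially the same route as the paper, which deduces Theorem \ref{LebAlmost} by combining the primitive-cycle counting argument of Section \ref{dio} with the metric Diophantine bound from \cite[Theorem 62]{W09} along an infinite sequence of scales. Your explicit inversion via $s^*(t)$ and the choice $t_0\approx 3s_0^{k+\delta}$ just makes precise the step the paper leaves implicit, including why only the upper box dimension is obtained.
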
 
	The above result is not as effective as Theorem \ref{Th3} in the sense we cannot tell whether a given $k$-tuple $(\alpha_1,\dots,\alpha_k)$ belongs to $A.$ However we can write down the complement $A^c$ explicitly as follows,
	\[
	A^c=\{(\alpha_1,\dots,\alpha_k): \Lambda=\{\alpha_1,\dots,\alpha_k\}, \exists \delta>0, \forall s\geq 1,\Phi_\Lambda(s)\leq s^{-k-\delta}\}.
	\]
	We know that if $(\alpha_1,\dots,\alpha_k)\in A^c$ then at least one of $\alpha_1,\dots,\alpha_k$ must be transcendental.
	
	We note here that the function $\Phi_{.}(.)$ is in fact an overkill to the problem on the box dimension of multi-rotation orbits. What we really want to consider is the following function
	\[
	\phi_\Lambda(s)=\min\left\{ \left\|\sum_{i=1}^k n_i\alpha_i\right\|: \sum_{i} n_i\leq s, n_1> 0,\dots,n_k> 0      \right\}.
	\]
	In general we see that $\Phi_\Lambda(s)\leq\phi_\Lambda(s)$ for all $s\geq 1.$ This type of problem was initiated by Schmidt, see \cite{S76}. In particular, it was shown that for $\alpha_1,\alpha_2$ being two rationally independent numbers then for any $\epsilon>0$, there exist infinitely many pairs of positive integers $k_1,k_2$ such that
	\[
	\|k_1\alpha_1+k_2\alpha_2\|\leq \epsilon\max\{k_1,k_2\}^{-\gamma}
	\]
	where $\gamma=(\sqrt{5}+1)/2$ is the golden ratio. It is known recently (see for example \cite{R14}) that this number $\gamma$ is optimal in the sense that for each $\delta$ there exists a pair of numbers $\alpha_1,\alpha_2$ such that
	\[
	\|k_1\alpha_1+k_2\alpha_2\|\leq \epsilon\max\{k_1,k_2\}^{-\gamma-\delta}
	\]
	holds for only finitely many positive integer pairs $k_1,k_2.$ In particular, this implies that exist a pair of rationally independent numbers $\alpha_1,\alpha_2$ and any multi-rotation orbits along sequences over $\{\alpha_1,\alpha_2\}$ has lower box dimension at least $0.618$. When there are more than two rotations, by \cite[Remark F]{S76}, for any $k\geq 3$, there exist $k\geq 3$ rationally independent numbers $\alpha_1,\dots,\alpha_k$ such that for any $\delta>0$ we have a positive number $C_\delta>0$ and for all $s\geq 1$,
	\[
	\phi_\Lambda(s)\geq C_\delta s^{-2-\delta},
	\] 
	where $\Lambda=\{\alpha_1,\dots,\alpha_k\}.$ This implies that for this particular choice of $\Lambda,$ all multi-rotation orbits have lower box dimension at least $0.5.$ 
	
	\section{Acknowledgement}
	HY was financially supported by the University of St Andrews.
	
	\providecommand{\bysame}{\leavevmode\hbox to3em{\hrulefill}\thinspace}
	\providecommand{\MR}{\relax\ifhmode\unskip\space\fi MR }
	\providecommand{\MRhref}[2]{%
		\href{http://www.ams.org/mathscinet-getitem?mr=#1}{#2}
	}
	\providecommand{\href}[2]{#2}


\begin{thebibliography}{dABCFS11}
		\bibitem[B17]{B17} H. Bruin. \emph{Notes on symbolic dynamics}.  \url{http://www.mat.univie.ac.at/~bruin/NotesSymDyn.pdf}
		
		\bibitem[E61]{E61} R. Engelking. \emph{Sur un probl\'{e}me de K. Urbanik concernant les ensembles lin\'{e}aires}. Colloq. Math., 8:243--250, 1961.
		
		\bibitem[F05]{Fa}K. Falconer. \emph{Fractal geometry: Mathematical foundations and applications, second edition}. John Wiley and Sons, Ltd, 2005.
		
		\bibitem[FX18]{FX18} D-J. Feng and Y. Xiong. \emph{Affine embeddings of Cantor sets and dimension of $\alpha\beta$-sets}. Israel J. Math., 226(2):805--826, 2018.
		
		
		\bibitem[F81]{F81} H. Furstenberg. \emph{Recurrence in Ergodic Theory and Combinatorial Number Theory}. Princeton University Press, 1981.
		
		\bibitem[K79]{K79} Y. Katznelson. \emph{On $\alpha\beta$-sets}. Israel J. Math., 33(1):1--4, 1979.
		
		\bibitem[M99]{Ma1} P. Mattila. \emph{Geometry of sets and measures in Euclidean spaces: Fractals and rectifiability}.    Cambridge Studies in Advanced Mathematics, Cambridge University Press, 1999.
		
		
		\bibitem[PY98]{PY} M. Pollicott and M. Yuri. \emph{Dynamical systems and ergodic theory}. London Mathematical Society Student Texts, Cambridge University Press, 1998.
		
		\bibitem[R14]{R14} D. Roy. \emph{Diophantine approximation with sign constraints}. Monatsh. Math., 173:417--432,2014.
		
		\bibitem[S70]{S80} W. Schmidt. \emph{Simultaneous approximation to algebraic numbers by rationals}. Acta. Math., 125:189--201,1970.
		
		\bibitem[S76]{S76} W. Schmidt. \emph{Two Questions in Diophantine Approximation}. Monatsh. Math., 82:237--245,1976.
		
		\bibitem[W09]{W09} M. Waldschmidt. \emph{Report on some recent advances in Diophantine approximation}. Preprint: arxiv09083973, 2009.
		
	\end{thebibliography}
\end{document}